\def\divdots{\rlap{\raisebox{-1pt}{.}}{\rlap{\raisebox{2pt}{.}}\raisebox{5pt}{.}}}
\def\ndivby{\mathrel{
    \divdots
    \kern-0.35em\raise0.22ex\hbox{/}
}}
\renewcommand{\leq}{\leqslant}
\renewcommand{\geq}{\geqslant}
\def\fs{\kern 0.5em}
\newcounter{prcnt}
\newcounter{pucnt}
\newcommand{\prmain}[1]{ 
    \medskip%
    \setcounter{pucnt}{0}%
    \stepcounter{prcnt}%
    \noindent\textbf{%
    \theprcnt%
    \ifthenelse{\equal{#1}{1}}{*}{}%
    .}\fs%
}
\newcommand{\pumain}[1]{
    \stepcounter{pucnt}{%
    \noindent\bf(\alph{pucnt}%
    \ifthenelse{\equal{#1}{1}}{*}{}%
    )}\fs%
}
\newtheorem{theorem}{Теорема}
\newtheorem{proposition}{Утверждение}
\newtheorem{lemma}{Лемма}
\newtheorem*{theorem*}{Теорема}
\theoremstyle{definition}
\DeclareMathOperator{\rk}{rk}
\DeclareMathOperator{\cutrk}{cutrk}
\title{О стабильности взвешенного степенного перечислителя остовных деревьев}
\author{Павел Прозоров$^{\mathrm{a}}$ и Данила Черкашин$^\mathrm{a,b}$\\
{\small ~a. Санкт-Петербургский Государственный Университет, Россия}\\
{\small ~b. Институт Математики и Информатики Болгарской Академии Наук, София, Болгария}}
\begin{document}

\maketitle

\begin{abstract}
    Paper~\cite{cherkashin2023stability} shows that the (vertex) spanning tree degree enumerator polynomial of a connected graph $G$ is a real stable polynomial (id est is non-zero if all variables have positive imaginary parts) if and only if $G$ is distance-hereditary.
    In this note we generalize the result on weighted graphs. 
    
    This generalization allows us to define the class of weighted distance-hereditary graphs.
     
\end{abstract}

\begin{abstract}
    В статье~\cite{cherkashin2023stability} показано, что степенной (вершинный) перечислитель остовных деревьев связного графа $G$ является вещественно-стабильным многочленом (то есть не обнуляется при подстановке переменных с положительными мнимыми частями) тогда и только тогда, когда $G$ принадлежит классу дистанционно-наследуемых графов.
    В данной заметке мы обобщаем данный результат на взвешенные графы. 
    
    Полученное обобщение позволяет определить класс взвешенных дистанционно-наследуемых графов. 
     
\end{abstract}

\section{Введение}

Определим верхнюю комплексную полуплоскость
\[
\mathbb{H}:=\{ z \in\mathbb{C}|\Im(z)>0\}.
\]
Полином $P(x_1,x_2, \dots,x_n)$ с вещественными коэффициентами называется \textit{вещественно ста\-биль\-ным}, если $P(z_1,z_2, \dots, z_n)\neq 0$  для любых $z_1,z_2,\dots, z_n \in \mathbb{H}$.
Ясно, что ненулевой многочлен вида 
\[
a_1x_1+a_2x_2+\dots+a_nx_n, \quad \quad a_i \in \mathbb{R}_+
\]
является вещественно стабильным, и очевидно, что произведение двух вещественно стабильных многочленов есть вещественно стабильный многочлен.
Следующее утверждение является широко известным, например, см.~\cite{wagner2011multivariate}.
\begin{proposition}
\label{pr:basic}
Пусть $P(x_1,x_2, \dots, x_n)$ --- вещественно стабильный многочлен. 
Тогда следующие многочлены также являются стабильными или тождественным нулем
\begin{itemize}
\item[(i)] $x_1^{d_1}P\left(-\frac{1}{x_1}, x_2, \dots, x_n\right)$, где $d_1$ --- степень $P$ по переменной $x_1$;
\item[(ii)] $\frac{\partial P}{\partial x_1}(x_1,x_2, \dots, x_n)$;
\item[(iii)] $Q(x_1,x_2, \dots, x_{n-1}) :=  P(x_1,x_2, \dots, x_{n-1},a)$ при любом вещественном $a$.
\end{itemize}
\end{proposition}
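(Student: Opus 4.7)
План таков: каждый из трёх пунктов доказывается независимо, стандартными приёмами теории вещественно стабильных многочленов.

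Пункт (i). Ключевое наблюдение --- отображение $z \mapsto -1/z$ переводит $\mathbb{H}$ в $\mathbb{H}$: если $z = re^{i\theta}$ при $\theta \in (0,\pi)$, то $-1/z = r^{-1} e^{i(\pi-\theta)}$, а $\pi-\theta$ снова лежит в $(0,\pi)$. Поэтому для $(z_1,\dots,z_n) \in \mathbb{H}^n$ имеем $P(-1/z_1, z_2, \dots, z_n) \ne 0$, и домножение на $z_1^{d_1}\ne 0$ дело не портит. То, что после этого домножения получится именно многочлен, видно из разложения $P = \sum_{k=0}^{d_1} x_1^k P_k(x_2,\dots,x_n)$: результат равен $\sum_k (-1)^k x_1^{d_1-k} P_k(x_2,\dots,x_n)$.

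Пункт (ii). Зафиксирую $(z_2,\dots,z_n) \in \mathbb{H}^{n-1}$ и рассмотрю многочлен одной переменной $p(x_1) := P(x_1, z_2, \dots, z_n)$. Из стабильности $P$ следует, что $p$ не обнуляется на $\mathbb{H}$; в частности, $p \not\equiv 0$, а все корни $p$ лежат в замкнутой нижней полуплоскости $\{w : \Im w \leq 0\}$. По теореме Гаусса-Люка корни $p'$ лежат в выпуклой оболочке корней $p$, то есть тоже в этой полуплоскости. Значит, $p'(z_1) \ne 0$ при $z_1 \in \mathbb{H}$, и $\partial P/\partial x_1$ либо тождественный нуль, либо вещественно стабилен.

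Пункт (iii) --- самый деликатный, поскольку вещественное $a$ лежит на границе $\mathbb{H}$, а не внутри, и прямое применение определения не проходит. План --- использовать теорему Гурвица. Для $\epsilon > 0$ определим $P_\epsilon(x_1,\dots,x_{n-1}) := P(x_1,\dots,x_{n-1}, a+i\epsilon)$; поскольку $a+i\epsilon \in \mathbb{H}$, каждый $P_\epsilon$ не обнуляется на $\mathbb{H}^{n-1}$. При $\epsilon \to 0^+$ коэффициенты $P_\epsilon$ сходятся к коэффициентам $Q$, следовательно, $P_\epsilon \to Q$ равномерно на компактах $\mathbb{C}^{n-1}$. По многомерной теореме Гурвица (предел равномерно сходящейся на компактах последовательности нигде не обнуляющихся голоморфных функций на связной области либо тождественный нуль, либо также нигде не обнуляется) многочлен $Q$ либо тождественно равен нулю, либо вещественно стабилен. Основное препятствие --- аккуратная ссылка именно на многомерный вариант теоремы Гурвица.
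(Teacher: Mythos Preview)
The paper does not prove this proposition at all: it is stated as well known and the reader is referred to Wagner's survey~\cite{wagner2011multivariate}. The arguments you give --- the observation that $z\mapsto -1/z$ preserves $\mathbb{H}$ for~(i), Gauss--Lucas for~(ii), and a Hurwitz limiting argument for~(iii) --- are precisely the standard ones found in that reference, so your proposal is in full agreement with what the paper invokes.

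One small technical point in your treatment of~(ii). After fixing $(z_2,\dots,z_n)\in\mathbb{H}^{n-1}$ and setting $p(x_1)=P(x_1,z_2,\dots,z_n)$, you deduce from Gauss--Lucas that $p'(z_1)\neq 0$ for $z_1\in\mathbb{H}$. This tacitly assumes $\deg p\geq 1$; if $p$ happened to be a nonzero constant on that slice, then $p'\equiv 0$ and the conclusion would fail there even though $\partial P/\partial x_1$ is not identically zero globally. The quickest patch is to observe that the leading coefficient $P_{d_1}(x_2,\dots,x_n)$ is itself stable --- it is the Hurwitz limit $\lim_{t\to +\infty}(it)^{-d_1}P(it,x_2,\dots,x_n)$ --- and hence does not vanish on $\mathbb{H}^{n-1}$, so $\deg p=d_1$ for every slice and the degenerate case never occurs. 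With this remark your argument is complete.
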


Больше о стабильных многочленов и их применениях можно узнать в обзорах~\cite{wagner2011multivariate,Csikvari2022ASS} (также см. комментарии авторов в разделах 1 и 4 статьи~\cite{cherkashin2023stability}).

Пусть $G=(V, E)$ --- конечный простой связный неориентированный граф, и пусть $|V|=n, |E|=k$.
Пусть $N_G(v)=\{u\in V\colon vu\in E\}$ --- окрестность вершины $v$, а $\deg_G(v)$ --- степень вершины $v$ в графе $G$.
Для подмножества вершин $U \subset V$ определим \textit{индуцированный подграф} $G[U]$ как граф, вершинами которого являются элементы $U$, а рёбра 
суть те рёбра графа $G$, оба конца которых
лежат в $U$.
Обозначим через $S(G)$ множество всех остовных деревьев графа $G$. 
Обозначим полный граф на $n$ вершинах и полный двудольный граф с долями размера $n$ и $m$ через $K_n$ и $K_{n,m}$, соответственно. 
\textit{Двусвязным} называется связный граф, остающийся связным при удалении любой вершины (и всех инцидентных ей рёбер).

Пронумеруем рёбра $G$ числами от 1 до $k$, и каждому ребру
$i=1,\ldots,k$ сопоставим переменную $x_i$.
Определим \textit{реберный остовный многочлен} графа $G$
\[
Q_G(x_1,x_2, \dots, x_k )=\sum_{T\in S(G) }\prod_{j \in E(T)}x_j.
\]
Как известно~\cite{choe2004homogeneous}, многочлен $Q_G$ является вещественно стабильным для любого конечного связного простого графа $G$, содержащего не меньше
двух вершин.
Можно пронумеровать вершины от 1 до $n$, также сопоставить
им переменные $x_1,\ldots,x_n$ и оп\-ре\-де\-лить \textit{вершинный остовный многочлен} 
\[
P_G(x_1, x_2, \dots, x_n)=\sum_{T \in S(G)} \prod_{v \in V}x_v^{\deg_T(v)-1},
\]

Этот многочлен не всегда является вещественно стабильным. Назовем \textit{стабильными} графы, для которых многочлен $P_G$ является вещественно стабильным. Назовем граф \textit{дистанционно-наследуемым}, если для любого его связного индуцированного подграфа расстояние между любыми двумя  вершинами в подграфе равно расстоянию между ними в исходном графе (более подробно о дистанционно-наследуемых графах см. в разделе~\ref{dh}).
В статье~\cite{cherkashin2023stability} показано, что эти классы совпадают.

\begin{theorem}[Петров -- Прозоров -- Черкашин~\cite{cherkashin2023stability}]
\label{th:main}
Стабильными графами являются в точности те графы, которые являются дистанционно-наследуемыми.
\end{theorem}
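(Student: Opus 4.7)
My plan is to prove the equivalence in both directions, using standard structural characterizations of distance-hereditary graphs together with the stability-preserving operations of Proposition~\ref{pr:basic}. The ``if'' direction rests on the constructive Bandelt--Mulder characterization, while the ``only if'' direction rests on the forbidden-induced-subgraph one.

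For the forward direction (distance-hereditary implies stable), I would invoke the Bandelt--Mulder fact that every connected distance-hereditary graph arises from $K_2$ by a sequence of three operations: attaching a pendant vertex, adding a true twin (a vertex with the same closed neighbourhood as an existing vertex), or adding a false twin (same open neighbourhood but non-adjacent). Since $P_{K_2} = 1$ is trivially stable, it suffices to check that each operation preserves stability. The pendant case is direct: if $G'$ is obtained by attaching pendant $u$ at $v$, every spanning tree $T'$ of $G'$ contains the edge $uv$, so $u$ contributes $x_u^0 = 1$ while $v$ gains a factor $x_v$, yielding $P_{G'} = x_v \cdot P_G$; multiplication by a linear form preserves stability. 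The twin cases I would handle by partitioning spanning trees of $G'$ according to the set of edges through the new vertex and expressing $P_{G'}$ through $P_G$ via a composition of partial derivatives, reciprocal transformations, and specializations, each stability-preserving by Proposition~\ref{pr:basic}.

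For the reverse direction (not distance-hereditary implies not stable), I would use the forbidden-induced-subgraph characterization: such a $G$ contains an induced hole $C_k$ with $k \geq 5$, a house, a gem, or a domino. For each such obstruction $H$, by direct finite computation I would exhibit a point in $\mathbb{H}^{|V(H)|}$ where $P_H$ vanishes, proving $P_H$ non-stable. The crux---and main obstacle---is to propagate non-stability from $H$ to any $G$ containing $H$ as an induced subgraph, which is nontrivial because spanning trees of $G$ do not restrict cleanly to spanning trees of $G[V(H)]$: some use vertices outside $V(H)$ as Steiner points. A natural approach is to first reduce to the case where $V(G) \setminus V(H)$ consists of pendants attached to $H$ (handled by specialization in Proposition~\ref{pr:basic}(iii) to eliminate the pendant's variable from $P_G$, up to a known linear factor), and to bring arbitrary induced-subgraph configurations to this form by a sequence of derivations and reciprocal transformations that sequentially remove extraneous vertices while preserving an exact correspondence with $P_H$. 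I expect this reduction to be the technical heart of the proof, whereas the twin identities in the forward direction, once the spanning-tree partition is set up, reduce to routine polynomial manipulations.
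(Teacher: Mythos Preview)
Your high-level strategy matches the one the paper attributes to~\cite{cherkashin2023stability}: use characterization~(vi) for ``distance-hereditary $\Rightarrow$ stable'' and characterization~(v) for the converse. However, you have missed the elementary observation that makes both directions short, and as a result you badly overestimate the difficulty of the converse.

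The point you are missing is this: if $G \setminus v$ is connected, then
\[
P_G\big|_{x_v = 0} \;=\; P_{G\setminus v}\cdot \sum_{t \in N_G(v)} x_t,
\]
because spanning trees $T$ of $G$ with $\deg_T(v)=1$ are in bijection with pairs (spanning tree of $G\setminus v$, choice of neighbour $t\in N_G(v)$). Combined with Proposition~\ref{pr:basic}(iii) this means stability is inherited by connected induced subgraphs: simply specialise the extraneous variables to $0$ one at a time (an ordering maintaining connectivity exists whenever the target induced subgraph is connected). Hence what you call ``the technical heart'' of the reverse direction is in fact a one-line reduction; all that remains is, for each forbidden $H$ (long cycle, gem, house, domino), to exhibit a point in $\mathbb{H}^{|V(H)|}$ at which $P_H$ vanishes. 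No pendant reductions, derivations, or reciprocal transformations are involved.

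For the forward direction your plan is likewise more convoluted than the actual argument. The twin step is handled by a direct product identity (the unweighted specialisation of Lemma~\ref{lm:copy}): if $G_1$ arises from $G$ by adding a twin $v_{n+1}$ of $v_n$, then
\[
P_{G_1}(x_1,\dots,x_{n+1}) \;=\; P_G(x_1,\dots,x_{n-1},\,x_n+x_{n+1}) \cdot \Bigl(\textstyle\sum_{t \in N_G(v_n)} x_t + p\,(x_n+x_{n+1})\Bigr),
\]
with $p\in\{0,1\}$ according to false/true twin. Since $x_n+x_{n+1}\in\mathbb{H}$ whenever $x_n,x_{n+1}\in\mathbb{H}$, stability of $P_G$ immediately gives stability of $P_{G_1}$. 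Your proposed detour through Proposition~\ref{pr:basic}(i)--(ii) is unnecessary, and you have not indicated how it would actually produce the needed relation.
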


В статье~\cite{cherkashin2023stability} приведены комбинаторная мотивация и приложения теоремы~\ref{th:main}, а также сформулированы связанные открытые вопросы. Решению одного из них --- получению классификации стабильных взвешенных графов --- и посвящена эта статья.

Определим \textit{взвешенный вершинный остовный многочлен}  графа $G$ c функцией весов $w$ из $E(G)$ в $\mathbb{R}$
\[
P_{G,w}(x_1, x_2, \dots, x_n)= \sum_{T \in S(G)} \prod_{e \in T}w(e) \prod_{v \in V} x_v^{\deg_T(v)-1} .
\]

Назовем взвешенный граф $(G,w)$ \textit{стабильным}, если многочлен $P_{G,w}$ стабильный. 
Не умаляя общности, можно считать, что $w$ не обращается в $0$ (такие рёбра можно удалить).

В статье~\cite{cherkashin2023stability} показано, что можно считать $w$ положительной функцией; мы повторим это рассуждение для полноты изложения. Если взвешенный граф не двусвязен, то можно менять знак $w$ в каждой компоненте двусвязности: по лемме~\ref{lm:gluing} нули многочлена $P_{G,w}$ зависят только от его редукций на компоненты двусвязности. 

\begin{lemma}
    Если взвешенный двусвязный граф $(G,w)$ стабилен, то $w$ имеет фиксированный знак на всех ребрах $G$.  
\end{lemma}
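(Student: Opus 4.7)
План. Ключевой шаг --- факторизация многочлена $P_{G,w}$ после подстановки $x_i = 0$ для вершины $i$. Так как $G$ двусвязен, $G-i$ связен для любой $i$, и остовные деревья графа $G$, в которых $i$ --- лист, находятся во взаимно однозначном соответствии с парами $(T', ij)$, где $T'$ --- остовное дерево $G-i$, а $ij$ --- ребро графа $G$, инцидентное $i$. Выделяя в $P_{G,w}$ мономы с $\deg_T(i)=1$ (только они выживают при подстановке) и учитывая степень в вершине $j$, получаем
$$P_{G,w}(x)\big|_{x_i=0} = L_i(x)\cdot P_{G-i,w}(x), \qquad L_i(x):=\sum_{j\in N_G(i)} w(ij)\,x_j.$$

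По утверждению~\ref{pr:basic}(iii) подстановка $x_i=0$ сохраняет вещественную стабильность, так что $L_i \cdot P_{G-i,w}$ вещественно стабилен. Всякий раз, когда $P_{G-i,w}\not\equiv 0$, стандартный факт, что делитель ненулевого вещественно стабильного многочлена сам вещественно стабилен, даёт вещественную стабильность $L_i$. Будучи ненулевой вещественной линейной формой (все веса по предположению ненулевые), $L_i$ вещественно стабилен тогда и только тогда, когда все его коэффициенты одного знака, так что все $w(ij)$ при $j\in N_G(i)$ имеют общий знак. Такая вершина $i$ заведомо существует: в противном случае $P_{G,w}$ делился бы на каждую $x_i$ и потому на $\prod_i x_i$, что невозможно в силу $\deg P_{G,w}=n-2<n$.

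Чтобы распространить вывод о знаке на всё $E(G)$, проведём индукцию по $|V(G)|$. База $G=C_n$: $G-i$ --- путь, так что $P_{G-i,w}$ является ненулевым одночленом, и аргумент с линейной формой непосредственно применим в каждой вершине. Если $G$ не цикл, ушное разложение даёт вершину $i$, при удалении которой $G-i$ остаётся двусвязным; многочлен $P_{G-i,w}$ сам вещественно стабилен, так что по индукции $w$ имеет фиксированный знак на $E(G-i)$, а аргумент с $L_i$ даёт общий знак на инцидентных $i$ рёбрах, и эти знаки согласуются через общую вершину. Основная трудность --- именно в согласовании знаков: оно требует применения того же анализа к подходящему соседу $j$ вершины $i$, что в свою очередь требует $P_{G-j,w}\not\equiv 0$. Исключение патологической ситуации, когда $P_{G-j,w}$ тождественно равно нулю из-за сокращений весов, для всех нужных кандидатов $j$, по-видимому, потребует более глобального использования двусвязности, нежели просто связности $G-i$.
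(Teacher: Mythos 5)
Your core mechanism is the same as the paper's: substitute $x_i=0$, factor $P_{G,w}\big|_{x_i=0}=L_i\cdot P_{G\setminus i,w}$ with $L_i=\sum_{j\in N_G(i)}w(ij)x_j$, and use the fact that a real linear form with coefficients of mixed signs is not stable. But your proof does not close, and you admit it: you only guarantee $P_{G\setminus i,w}\not\equiv 0$ for \emph{some} vertex $i$ (the divisibility-by-$\prod_i x_i$/degree argument is fine), which gives a common sign on the edges at that one vertex, and you never propagate this to all of $E(G)$. The missing ingredient is exactly the claim that $P_{H,w}\not\equiv 0$ for every connected graph $H$ with nonzero weights. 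The paper uses this claim (for $H=G\setminus v$) and then needs only one well-chosen vertex: if the sign were not constant, connectivity of $G$ would produce a vertex $v$ incident to two edges of opposite signs, and the instability of $L_v$ already gives the contradiction. The nonvanishing itself can be justified, e.g., by noting that the lexicographically maximal degree sequence is attained by a unique spanning tree, so the corresponding coefficient of $P_{H,w}$ is a nonzero product of weights; with this fact your own per-vertex argument also finishes at once, since a common sign around every vertex plus connectivity of $G$ gives a common sign globally.

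The inductive repair you sketch does not fill the gap. The structural claim that every $2$-connected graph other than a cycle has a vertex $i$ with $G-i$ still $2$-connected is false: take two vertices $u,v$ joined by three internally disjoint paths of length $3$; deleting any internal vertex leaves a pendant vertex, and deleting $u$ or $v$ leaves a tree, so no deletion preserves $2$-connectivity (the Chartrand--Kaugars--Lick theorem requires minimum degree $3$). Moreover, even where such a vertex exists, your matching of signs across vertices would still require $P_{G\setminus j,w}\not\equiv 0$ for the neighbours $j$ you compare through, which is precisely what you could not ensure. As written, the argument proves the constant sign only on the star of one vertex, not on all of $E(G)$, so the lemma is not yet established.
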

\begin{proof}
Предположим противное. Тогда в графе есть два ребра разного знака. Тогда в силу связности есть два ребра, имеющих общий конец, разного знака. Пусть это вершина $v$, и ребра --- $vu_1$  и $vu_2$, и $w(vu_1)>0>w(vu_2)$. Заметим, что при подстановке $x_v=0$ мы получим 
\[
P_{G \setminus v} (x_1, \dots, x_n) \cdot \left (\sum_{t \in N(v)} w(v,t)x_t\right).
\]
Тогда оба этих многочлена не являются тождественным нулем (т.к. $G \setminus v$ связный), а значит они оба стабильны, но второй многочлен очевидно не является стабильным, потому что в него можно подставить все переменные, кроме $x_{u_1}, x_{u_2}$ нулями, и он должен остаться стабильным, но при подстановке $x_{u_1}=-iw(vu_2), x_{u_2}=iw(vu_1)$ он обращается в ноль. Противоречие.
\end{proof}
Значит внутри каждой компоненты двусвязности знак одинаковый, и в каждой компоненте мы можем поменять знак, и это не повлияет на стабильность всего графа. Значит достаточно классифицировать графы, в которых все веса положительны (произвольный взвешенный граф стабилен тогда и только тогда, когда получается сменами знаков в компонентах двусвязности из взвешенного стабильного графа с положительными весами).

Далее везде будем считать все графы взвешенными, а все веса положительными, если не оговорено противное.
Следующая теорема дает явную (проверяемую за полиномиальное время) характеризацию взвешенных стабильных графов.

\begin{theorem}
\label{mainweighted}
Взвешенно стабильными графами с положительной $w$ являются в точности графы, получаемые из графа с одной вершиной путем копирований вершин, сохраняющих веса (с добавлением ребра произвольного положительного веса или без него), сочленений двух взвешенно-стабильных графов по вершине и умножений всех ребер из одной вершины на положительное число. 
\end{theorem}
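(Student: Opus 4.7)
The plan is to prove both implications, using Proposition~\ref{pr:basic}, Lemma~\ref{lm:gluing}, and Theorem~\ref{th:main}. For the sufficiency direction, the gluing operation is immediate from Lemma~\ref{lm:gluing}, and the scaling operation at $v$ preserves stability because $P_{G, w'}(x) = c \cdot P_{G, w}(x_1, \ldots, c x_v, \ldots, x_n)$ and $z \mapsto cz$ is a bijection of $\mathbb{H}$ for $c > 0$. The key observation for the copy operation is the explicit polynomial identity
\[
P_{G', w'}(x) = P_{G, w}(x_1, \ldots, x_v + x_{v'}, \ldots) \cdot \Bigl(\sum_{u \in N_G(v)} w(vu)\, x_u + \alpha\, (x_v + x_{v'})\Bigr),
\]
where $G'$ is the copy of $G$ adding a new vertex $v'$ with $N_{G'}(v') = N_G(v)$ and matching edge weights, plus an optional edge $vv'$ of weight $\alpha \geq 0$ (take $\alpha = 0$ for a false-twin copy and $\alpha > 0$ for a true-twin copy). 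This identity can be verified by summing over spanning trees of $G'$ according to how $v$ and $v'$ share the edges into $N_G(v)$; I have checked it on $K_2 \to K_3$ and on $P_3 \to C_4$. The right-hand factor is a linear polynomial with non-negative coefficients (hence stable); the left-hand factor is obtained from $P_{G, w}$ by the linear substitution $x_v \mapsto x_v + x_{v'}$, which maps $\mathbb{H}^2$ into $\mathbb{H}$, so it is again stable. The product of two stable polynomials is stable.

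For the necessity direction I would induct on $|V(G)|$. Setting $x_v = 0$ in $P_{G, w}$ and observing that only spanning trees with $\deg_T(v) = 1$ survive gives the factorization
\[
\left.P_{G, w}\right|_{x_v = 0} = \Bigl(\sum_{u \in N(v)} w(vu)\, x_u\Bigr) \cdot P_{G \setminus v,\, w}(x_1, \ldots, \hat x_v, \ldots, x_n),
\]
and by Proposition~\ref{pr:basic}(iii) both factors are real-stable (or identically zero); in particular $(G \setminus v, w)$ is itself weighted-stable whenever $G \setminus v$ is connected. If $G$ has a cut vertex, the induction closes at once via block decomposition together with Lemma~\ref{lm:gluing}. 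If $G$ is biconnected, then every single-vertex deletion yields a connected, smaller, weighted-stable graph, whose underlying graph is distance-hereditary by the inductive hypothesis. Since each forbidden induced subgraph for distance-heredity (cycles $C_n$ for $n \geq 5$; house; gem; domino) has at most six vertices, this forces the underlying graph $G$ itself to be distance-hereditary, with the small cases $|V(G)| \leq 6$ checked directly. A biconnected distance-hereditary graph on $\geq 2$ vertices has a twin pair $v, v'$. The crucial final step is the weighted-compatibility claim: that the ratios $w(vu)/w(v'u)$ are equal to a common constant $c$ for all $u \in N(v) \setminus \{v'\}$. Once it is proven, scaling at $v'$ by $1/c$ reduces to the case $w(vu) = w(v'u)$, at which point $v'$ can be ``un-copied'' from $v$, yielding a smaller stable weighted graph, and the inductive hypothesis completes the construction (after a final scaling at $v'$ to restore the weight of $vv'$ in the true-twin case).

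The main obstacle is the weighted-compatibility claim in the biconnected case. I would approach it by specializing $P_{G, w}$ to $x_y = 0$ for every $y \notin \{v, v', u_1, u_2\}$, where $u_1, u_2$ are two distinct common neighbours of the twin pair, and then extracting from the resulting stable polynomial in four variables the constraint $w(vu_1)/w(v'u_1) = w(vu_2)/w(v'u_2)$ --- the same mechanism as in the sign lemma immediately preceding Theorem~\ref{mainweighted}, where incompatible weights at a common vertex force a zero in $\mathbb{H}^n$. A secondary obstacle is the small-$|V|$ check in the argument that $G$ is distance-hereditary, since the inductive argument via vertex-deletion only closes automatically once $|V(G)|$ exceeds the size of every forbidden induced subgraph.
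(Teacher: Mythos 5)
Your sufficiency direction and the outer scaffold of the necessity direction (deleting a vertex via $x_v=0$, handling cut vertices by blocks, un-copying a twin once its weights are compatible) coincide with the paper's Lemmas~\ref{lm:copy}--\ref{lm:multyplying}, and that part is fine. The genuine gap is the ``weighted-compatibility claim'', which is both the heart of the matter and false in the form you use it: it is not true that the twin pair $v,v'$ you pick has constant ratios $w(vu)/w(v'u)$. The paper's own example --- $K_4$ with five edges of weight $1$ and one edge, say $v_1v_2$, of weight $2$ --- is weighted-stable (it is built by copy operations), yet the twin pair $v_1,v_3$ has $w(v_1v_2)/w(v_3v_2)=2\neq 1=w(v_1v_4)/w(v_3v_4)$. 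Your proposed mechanism cannot detect this, because specializing to the four variables $\{v,v',u_1,u_2\}$ yields the induced $K_4$ or $C_4^+$ or $C_4$, and the base-case analysis (section~\ref{subsec:baza}) only forces \emph{some} two of the three opposite-edge products to coincide --- a disjunction that may be realized by a different pairing than $\{v,v'\}$ versus $\{u_1,u_2\}$, exactly as in the example. So no contradiction can be extracted for a prescribed twin pair, and what must actually be proved is existential: \emph{some} twin pair is weight-compatible (a contractible pair). Establishing this is the bulk of the paper: it strengthens the induction to Proposition~\ref{mainProp} (two \emph{disjoint} contractible pairs, needed so that a pair survives the deletion of $v'$), proves it separately for complete graphs (Lemma~\ref{lm:complete}), and globalizes the local four-vertex constraints through Lemma~\ref{lm:covering} and a long case analysis according to whether $G\setminus v'$ is biconnected. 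Your proposal contains no mechanism for selecting the right pair or for propagating the four-vertex disjunctions to a global ratio constancy, so the key step is missing, not merely unelaborated.

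Two further, smaller gaps. First, your claim that every forbidden induced subgraph for distance-heredity has at most six vertices is wrong: induced cycles of every length $\geq 5$ are forbidden, so the vertex-deletion argument leaves open the case where $G$ itself is a long induced cycle; the paper closes this in Lemma~\ref{lm:support} by normalizing all but one edge of the cycle to weight $1$ via Lemma~\ref{lm:multyplying} and reusing the unweighted substitution, and you would need the same. Second, ``checked directly for $|V(G)|\leq 6$'' is not a finite check, since the weights range over a continuum: ruling out weighted houses, gems, dominoes, $C_5$ and $C_6$ requires the normalization-plus-explicit-root arguments of Lemma~\ref{lm:support} (e.g.\ the $C_4$ computation with $t\neq 1$), which your sketch does not supply.
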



Отметим, что класс взвешенно-стабильных графов не сводится к классу стабильных графов с помощью операций домножения. 
Иллюстрирующим примером является полный граф на четырех вершинах, пять ребер которого единичного веса, а шестое имеет вес 2.

\paragraph{Структура статьи.}  В разделе~\ref{dh} приводятся эквивалентные определения дистанционно-наследуемых графов, часть из которых мы используем.
В разделе~\ref{proof} доказывается основная теорема. В разделе~\ref{diss} обсуждается связь полученного многочленного определения взвешенного дистанционно-наследуемого графа и других определений.

\section{Дистанционно-наследуемые графы}
\label{dh}

Напомним, что дистанционно-наследуемый граф --- это граф, у которого любой связный индуцированный подграф сохраняет расстояние между любыми двумя вершинами. 
Более подробно о дистанционно-наследуемых графах можно прочесть, например, в статье~\cite{bandelt1986distance} и книге~\cite{brandstadt1999graph}.
В частности, в этой книге даны следующие эквивалентные определения класса дистанционно-наследуемых графов:

\begin{itemize}
    \item[(i)] Это графы, в которых любой порождённый путь является кратчайшим.
    \item[(ii)] Это графы, в которых любой цикл длины по меньшей мере пять имеет две или более диагоналей и в которых любой цикл длины в точности пять имеет по меньшей мере одну пару пересекающихся диагоналей.
    \item[(iii)] Это графы, в которых любой цикл длины пять и более имеет по меньшей мере одну пару пересекающихся диагоналей.
    \item[(iv)] Это графы, в которых для любых четырёх вершин $u$, $v$, $w$ и $x$ по меньшей мере две из трёх сумм расстояний $d(u,v)+d(w,x)$, $d(u,w)+d(v,x)$ и $d(u,x)+d(v,w)$ равны.
    \item[(v)] Это графы, в которых отсутствуют следующие индуцированные подграфы: цикл длины пять и более, изумруд, дом или домино (см. рис.~\ref{fig:forbidden}).
\begin{figure}[H]
    \centering
        \hfill  
   \begin{tikzpicture}[scale=1.8]
    
    \clip(1.5,0.8) rectangle (3.5,2.8);

    \coordinate(a1) at (2,1);
    \coordinate(a2) at (3,1);
    \coordinate(a3) at (3.3,1.9);
    \coordinate(a4) at (2.5,2.6);
    \coordinate(a5) at (1.7,1.9);

    \draw [blue,ultra thick] (a1)--(a2)--(a3)--(a4)--(a5);
    \draw [blue,ultra thick,dashed] (a5)--(a1);

    \fill (a1) circle (1.2pt);
    \fill (a2) circle (1.2pt);
    \fill (a3) circle (1.2pt);
    \fill (a4) circle (1.2pt);
    \fill (a5) circle (1.2pt);

\end{tikzpicture}
     \hfill
   \begin{tikzpicture}[scale=1.8,rotate=180]
    
    \clip(1.5,0.8) rectangle (3.5,2.8);

    \coordinate(a1) at (2,1);
    \coordinate(a2) at (3,1);
    \coordinate(a3) at (3.3,1.4);
    \coordinate(a4) at (2.5,2.6);
    \coordinate(a5) at (1.7,1.4);

    \draw [blue,ultra thick] (a1)--(a2)--(a3)--(a4)--(a5)--(a1);
    \draw [blue,ultra thick] (a1)--(a4)--(a2);
    
    \fill (a1) circle (1.2pt) node[right]{4};
    \fill (a2) circle (1.2pt) node[left]{3};
    \fill (a3) circle (1.2pt) node[left]{2};
    \fill (a4) circle (1.2pt) node[below]{1};
    \fill (a5) circle (1.2pt) node[right]{5};

\end{tikzpicture}
    \hfill 
    \begin{tikzpicture}[scale=1.8]
    
    \clip(1.5,0.8) rectangle (3.5,2.8);

    \coordinate(a1) at (2,1);
    \coordinate(a2) at (3,1);
    \coordinate(a3) at (3.3,1.9);
    \coordinate(a4) at (2.5,2.6);
    \coordinate(a5) at (1.7,1.9);

    \draw [blue,ultra thick] (a1)--(a2)--(a3)--(a4)--(a5)--(a1);
    \draw [blue,ultra thick] (a3)--(a5);

    \fill (a1) circle (1.2pt) node[below]{1};
    \fill (a2) circle (1.2pt) node[below]{5};
    \fill (a3) circle (1.2pt) node[below right]{4};
    \fill (a4) circle (1.2pt) node[below]{3};
    \fill (a5) circle (1.2pt) node[below left]{2};

\end{tikzpicture}
    \hfill 
    \begin{tikzpicture}[scale=1.8]
    
    \clip(1.5,0.8) rectangle (3.5,2.8);

    \coordinate(a1) at (1.7,1);
    \coordinate(a2) at (2.5,1);
    \coordinate(a3) at (3.3,1);
    \coordinate(a4) at (1.7,2.6);
    \coordinate(a5) at (2.5,2.6);
    \coordinate(a6) at (3.3,2.6);

    \draw [blue,ultra thick] (a1)--(a2)--(a3)--(a6)--(a5)--(a4)--(a1);
    \draw [blue,ultra thick] (a2)--(a5);

    \fill (a1) circle (1.2pt) node[below left]{6};
    \fill (a2) circle (1.2pt) node[below left]{1};
    \fill (a3) circle (1.2pt) node[below left]{2};
    \fill (a4) circle (1.2pt) node[below left]{5};
    \fill (a5) circle (1.2pt) node[below left]{4};
    \fill (a6) circle (1.2pt) node[below left]{3};

\end{tikzpicture}
    \caption{Запрещенные индуцированные подграфы слева направо: длинный цикл, изумруд, дом и домино}
    \label{fig:forbidden}
\end{figure}
    \item[(vi)] Это графы, которые могут быть созданы из одной вершины с помощью последовательности из следующих трёх операций:
    \begin{itemize}
        \item Добавление новой висячей вершины, соединённой одним ребром с существующей вершиной графа.
        \item Замена любой вершины графа парой вершин, каждая из которых имеет тех же соседей, что и удалённая вершина. 
        \item Замена любой вершины графа парой вершин, каждая из которых имеет тех же соседей, что и удалённая вершина, включая другую вершину из пары. 

    \end{itemize}
    
\end{itemize}

Отметим, что доказательство теоремы~\ref{th:main} основывалось на эквивалентности определений~(v) и~(vi).
Доказательство теоремы~\ref{mainweighted} также использует определения~(v) и~(vi), а также идею, стоящую за определением~(iv).

Еще одно алгебраическое определение появилось позднее в статье~\cite{oum2005rank}. Мы подробно обсудим его в разделе~\ref{diss}.

\section{Доказательство теоремы~\ref{mainweighted}}
\label{proof}

Напомним, что по умолчанию все графы являются взвешенными с положительными весами.

\subsection{Любой построенный граф является взвешенно-стабильным}

Следующие леммы являются взвешенными обобщениями лемм из статьи~\cite{cherkashin2023stability}.

\begin{lemma}[о копировании] \label{lm:copy}
    Пусть дан взвешенный граф $(G,w)$ на $n$ вершинах. Рассмотрим взвешенный граф $(G_1,w_1)$, получаемый из графа $(G,w)$ добавлением вершины $v_{n+1}$, соединением ее с вершинами из $N(v_n)$ ребрами, равными по весу ребрам из $v_n$, и с вершиной $v_n$ ребром веса $p \geq 0$ (отметим, что случай $p=0$ соответствует непроведению этого ребра).
    Тогда 
    \[
    P_{(G_1,w_1)}(x_1, \dots, x_n, x_{n+1})=P_{G,w}(x_1, \dots, x_n+x_{n+1}) \left( \sum_{t \in N(v_n)}w(tv_n)x_t+ p(x_n+x_{n+1}) \right).
    \]
\end{lemma}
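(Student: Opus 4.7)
Представим сумму $P_{(G_1,w_1)} = \sum_{T_1 \in S(G_1)} \prod_e w_1(e) \prod_v x_v^{\deg_{T_1}(v)-1}$ в виде двух слагаемых по индикатору $\alpha = [v_n v_{n+1} \in T_1]$ и для каждого слагаемого построим биекцию между $T_1$ и парой $(T, \text{данные})$, где $T \in S(G)$ --- остовное дерево, получаемое из $T_1$ удалением одного ребра с последующим склеиванием вершин $v_n$ и $v_{n+1}$.

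В случае $\alpha=1$ удаляемое ребро --- само $v_n v_{n+1}$; дополнительные данные --- подмножество $S \subseteq N_T(v_n)$, задающее, какие рёбра $v_n t$ дерева $T$ перенаправляются в рёбра $v_{n+1} t$ в $T_1$. Любой такой выбор даёт корректное остовное дерево графа $G_1$. Суммирование $\sum_S x_n^{\deg_T(v_n)-|S|} x_{n+1}^{|S|} = (x_n+x_{n+1})^{\deg_T(v_n)}$ с учётом множителя $p$ (веса ребра $v_n v_{n+1}$) даст вклад
\[
p(x_n+x_{n+1}) \cdot P_{G,w}(x_1, \dots, x_n+x_{n+1}).
\]

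В случае $\alpha=0$ удаляемое ребро --- $e^* = v_{n+1} t^*$, первое ребро единственного в $T_1$ пути из $v_{n+1}$ в $v_n$ (где $t^* \in N(v_n)$). Дополнительные данные --- пара $(t^*, S)$, где $S \subseteq N_T(v_n) \setminus \{\pi(t^*)\}$, а $\pi(t^*)$ --- сосед вершины $v_n$ в $T$, лежащий на пути от $v_n$ к $t^*$ (с соглашением $\pi(t^*) = t^*$ при $t^* \in N_T(v_n)$). Суммирование по $S$ даст множитель $(x_n+x_{n+1})^{\deg_T(v_n)-1}$, а дополнительный фактор $w(t^* v_n)\,x_{t^*}$ (из веса $e^*$ и вклада $+1$ в степень вершины $t^*$) при суммировании по $t^*$ превращается в $\sum_{t \in N(v_n)} w(tv_n) x_t$. Тем самым вклад этого случая равен $P_{G,w}(x_1, \dots, x_n+x_{n+1}) \cdot \sum_{t \in N(v_n)} w(tv_n) x_t$, и сумма двух вкладов даёт требуемое равенство.

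Основная техническая трудность --- аккуратная проверка биекции в случае $\alpha=0$. В прямую сторону нужно убедиться, что $\pi(t^*) \in N_{T_1}(v_n)$ (а не в $N_{T_1}(v_{n+1})$), что автоматически влечёт $\pi(t^*) \notin S$: иначе в $T_1 \setminus e^*$ вершины $t^*$ и $v_{n+1}$ оказались бы в одной компоненте, вопреки тому, что $e^*$ --- мост дерева $T_1$. В обратную сторону: для тройки $(T, t^*, S)$ с $\pi(t^*) \notin S$ множество $V(G_1)$ распадается на «сторону $v_n$», равную $\{v_n\} \cup \bigcup_{t \in N_T(v_n) \setminus S} C_t$, и «сторону $v_{n+1}$», равную $\{v_{n+1}\} \cup \bigcup_{t \in S} C_t$, где $C_t$ --- компонента графа $T \setminus v_n$, содержащая $t$. Ограничение $\pi(t^*) \notin S$ гарантирует, что $t^*$ попадает в сторону $v_n$, а значит ребро $e^* = v_{n+1} t^*$ соединяет две стороны единственным мостом, и итоговый подграф является остовным деревом графа $G_1$.
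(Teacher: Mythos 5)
Ваше доказательство корректно, но устроено иначе, чем в статье. Авторы сначала выводят вспомогательное представление $P_{G,w}$ в виде суммы по лесам на $V\setminus\{v_n\}$, каждая компонента которых содержит соседа $v_n$, а затем переписывают через те же леса оба слагаемых (деревья с ребром $v_nv_{n+1}$ и без него); общий множитель $\sum_{t\in N(v_n)}w(tv_n)x_t$ и замена $x_n\mapsto x_n+x_{n+1}$ возникают уже на уровне этой лесной формулы. Вы вместо этого строите прямую взвешенную биекцию между $S(G_1)$ и остовными деревьями $T$ графа $G$, снабжёнными данными о расщеплении вершины $v_n$: при наличии ребра $v_nv_{n+1}$ --- произвольное $S\subseteq N_T(v_n)$, при его отсутствии --- пара $(t^*,S)$ с $S\subseteq N_T(v_n)\setminus\{\pi(t^*)\}$; факторизация получается из тождества $\sum_{S}x_n^{a-|S|}x_{n+1}^{|S|}=(x_n+x_{n+1})^{a}$ и суммирования по $t^*\in N_G(v_n)$, а равенство весов перенаправленных рёбер $w_1(v_{n+1}t)=w(v_nt)$ обеспечивает сохранение весового множителя. Ключевой технический момент --- условие $\pi(t^*)\notin S$ и его необходимость и достаточность для биективности --- у вас выделен и обоснован верно в обе стороны (в прямую через то, что $e^*$ --- мост, отделяющий $v_{n+1}$ от $v_n$ и $t^*$, в обратную через разбиение на стороны $v_n$ и $v_{n+1}$). Ваш путь обходится без промежуточной лесной формулы и даёт факторизацию почленно по деревьям $G$, но требует аккуратной проверки биекции; подход статьи, напротив, переносит всю комбинаторику в одну переформулировку $P_{G,w}$, после чего оба случая обрабатываются единообразной выкладкой.
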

\begin{proof}
Рассуждения аналогичны случаю невзвешенных графов.
Действительно, давайте заметим, что любое дерево в графе $G$ устроено следующим образом --- на всех вершинах кроме $v_n$ берется некоторый лес, такой, что в каждой компоненте есть хотя бы одна вершина из $N_G(v_n)$, и потом вершина $v_n$ соединяется с ровно одной вершиной из каждой компоненты.
Обозначим за $L$ множество всех таких лесов, за $t(K)$ --- число компонент связности в лесу $K$, 
и назовем $A_1, A_2, \dots, A_t$ пересечения множества $N_G(v)$ с компонентами связности леса $K$.
Пусть 
\[
W(K)= \prod_{uv \in K}w(uv)x_ux_v.
\]
Тогда из рассуждения выше 
\[
P_{G,w}(x_1, x_2, \dots, x_n)=\sum_{K \in L}\left(W(K)\prod_{i=1}^{t(K)}\left(\sum_{v \in A_i}w(v_nv)x_v\right) x_n^{t(K)-1}\right).
\]
Теперь давайте смотреть на то, как устроены деревья в графе $G_1$.
Эти деревья делятся на те, которые не содержат ребро $v_nv_{n+1}$, и те, которые содержат его. Тогда пусть множество деревьев первого типа это $S_1$, второго --- $S_2$.
Тогда
\[
P_{G_1,w_1}(x_1, \dots, x_{n+1})=\sum_{T \in S(G)} \prod_{e \in T}w(e) \prod_{v \in V} x_v^{\deg_T(v)-1}= 
\]
\[
\sum_{T \in S_1} \prod_{e \in T}w(e) \prod_{v \in V} x_v^{\deg_T(v)-1}+\sum_{T \in S_2} \prod_{e \in T}w(e) \prod_{v \in V} x_v^{\deg_T(v)-1}= P_1(x_1, \dots, x_{n+1})+P_2(x_1, \dots, x_{n+1}). 
\]
Деревья в $S_1$ устроены следующим образом. Аналогично исходному графу мы рассматриваем лес, который содержит  все вершины кроме $v_n, v_{n+1}$, и такой, что каждая его компонента содержит хотя бы одну вершину из $N_G(v_n)$, после чего соединяем одну из долей с обеими вершинами из $v_n, v_{n+1}$, а каждая из остальных $t(K)-1$ долей --- с ровно одной из этих вершин. Заметим, что с точки зрения веса дерева не важно, с какой из вершин $v_n, v_{n+1}$ мы соединяем очередную вершину, потому что по построению $w(v_nt)=w_1(v_nt)=w_1(v_{n+1}t)$ для любого $t \in N(v_n)$.
Тогда 
\[
P_1(x_1, \dots, x_{n+1})= \sum_{K \in L}\left(W(K)\sum_{i=1}^{t(K)} \left(\left(\sum_{v \in A_i} w(v_nv)x_v\right)^2\prod_{j=1, j \neq  i}^n\left(\sum_{v \in A_j}w(v_nv)x_v\right)(x_n+ x_{n+1})^{t(K)-1}\right)\right)=
\]
\[
=\sum_{K \in L}\left(W(K)\sum_{i=1}^{t(K)} \left(\left(\sum_{v \in A_v} w(v_nv)x_v\right)\prod_{j=1}^n\left(\sum_{v \in A_j}w(v_nv)x_v\right)(x_n+ x_{n+1})^{t(K)-1}\right)\right)=
\]
\[
=\left(\sum_{v \in N_G(v_n)}w(v_nv)x_v\right)\cdot\sum_{K \in L}\left(W(K)\prod_{j=1}^n\left(\sum_{v \in A_j}w(v_nv)x_v\right)(x_n+ x_{n+1})^{t(K)-1}\right).
\]
Теперь видно, что второй сомножитель это $P_{G,w}(x_1, \dots, x_n+x_{n+1})$.

Давайте разбираться с деревьями из $S_2$.
Заметим, что они устроены так: мы снова берем лес с такими же условиями, каждую из долей соединяем с ровно одной из вершин $v_n, v_{n+1}$, и соединяем их между собой.
Отметим, что снова не важно, с какой конкретно из них мы соединяем, причем по тем же причинам.
Тогда 
\[
P_2(x_1, \dots x_{n+1})=p\sum_{K \in L}\left(W(K)\prod_{i=1}^{t(K)} \left (\sum_{v \in A_i}w(v_nv)x_v \right) ( x_n+x_{n+1})^{t(K)}\right)=
\]
\[
= p(x_n+x_{n+1}) P_{G,w}(x_1, x_2, \dots, x_n+x_{n+1}).
\]
Отметим, что $p$ в начале берется из рассматриваемого ребра $v_nv_{n+1}$, а множители $x_n$ и $x_{n+1}$ сокращаются с $-1$ в степенях вхождения переменных.
Тогда получается, что
\[
P_{G_1,w_1}(x_1, \dots, x_{n+1})=P_1(x_1, \dots, x_{n+1})+ P_2(x_1, \dots, x_{n+1})=
\]
\[
=P_{G,w}(x_1, \dots, x_n)\left (\sum_{v \in N(v_n)}w(v_nv)x_v \right)+ P_{G,w}(x_1, \dots, x_n) \cdot (px_n +px_{n+1})=\]
\[
P_{G,w}\left(\sum_{v \in N(v_n)}w(v_nv)x_v) +p(x_n+ px_{n+1}) \right).
\]
\end{proof}

\begin{lemma}[о точках сочленения]  \label{lm:gluing}
Пусть дан взвешенный граф $G$, имеющий точку сочленения $v$, и пусть множество вершин графа $G$ после удаления $v$ распадается на компоненты связности с множествами вершин $V_1,V_2, \dots, V_k$. Положим $G_i = G[V_i \cup \{v\}]$. 
Тогда 
\[
P_G(x_1, x_2, \dots, x_n)=\prod_{i=1}^kP_{G_i}x_v^{k-1}, 
\]
где в $P_{G_i}$ подставляются переменные, соответствующие вершинам, принадлежащим $G_i$.
\end{lemma}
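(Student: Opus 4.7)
The plan is to prove the identity by setting up a bijection between $S(G)$ and $S(G_1) \times \dots \times S(G_k)$, and then tracking how the edge-weight and vertex-exponent factors transform under this bijection. Since $v$ is an articulation point, for any spanning tree $T$ of $G$ its restriction $T_i := T[V(G_i)]$ to $V(G_i) = V_i \cup \{v\}$ is a spanning tree of $G_i$; conversely, given arbitrary spanning trees $T_i \in S(G_i)$, their union (identifying the copies of $v$) is a spanning tree of $G$. So $T \mapsto (T_1,\dots,T_k)$ is the desired bijection. Note also that $v$ has at least one neighbour inside each $V_i$, so $\deg_{T_i}(v) \geq 1$ and no negative exponents appear.

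The edge-weight factor is immediate: every edge of $G$ lies in exactly one $G_i$, so $\prod_{e \in T} w(e) = \prod_{i=1}^k \prod_{e \in T_i} w(e)$. The one place where care is needed is the vertex-exponent factor. For $u \in V_i$ with $u \neq v$ we have $\deg_T(u) = \deg_{T_i}(u)$, while $\deg_T(v) = \sum_{i=1}^k \deg_{T_i}(v)$ because the neighbours of $v$ in $T$ split as a disjoint union of the neighbours of $v$ in each $T_i$. Hence
\[
\prod_{u \in V} x_u^{\deg_T(u) - 1} = x_v^{\deg_T(v) - 1} \prod_{i=1}^k \prod_{u \in V_i} x_u^{\deg_{T_i}(u) - 1},
\]
while
\[
\prod_{i=1}^k \prod_{u \in V(G_i)} x_u^{\deg_{T_i}(u) - 1} = x_v^{\sum_i \deg_{T_i}(v) - k} \prod_{i=1}^k \prod_{u \in V_i} x_u^{\deg_{T_i}(u) - 1} = x_v^{\deg_T(v) - k} \prod_{i=1}^k \prod_{u \in V_i} x_u^{\deg_{T_i}(u) - 1}.
\]
The two expressions differ by precisely $x_v^{k-1}$, which is the only substantive computation in the argument: at the vertex $v$ the shift $-1$ in the exponent is counted once on the left but $k$ times on the right.

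Combining the bijection, the factorization of the edge-weight factor, and the exponent identity, and summing over $T \in S(G)$, we obtain
\[
P_{G,w} = x_v^{k-1} \prod_{i=1}^k \sum_{T_i \in S(G_i)} \prod_{e \in T_i} w(e) \prod_{u \in V(G_i)} x_u^{\deg_{T_i}(u) - 1} = x_v^{k-1} \prod_{i=1}^k P_{G_i,w},
\]
which is the desired identity. There is no real obstacle beyond the exponent bookkeeping described above; the heart of the argument is simply that spanning trees behave multiplicatively across an articulation point, and the $x_v^{k-1}$ correction records the mismatch between one global $-1$ shift at $v$ and $k$ local $-1$ shifts, one per block.
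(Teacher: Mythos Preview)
Your proof is correct and follows exactly the same approach as the paper's: the bijection between spanning trees of $G$ and tuples of spanning trees of the $G_i$, together with the observation that $\deg_T(v)=\sum_i\deg_{T_i}(v)$, is precisely what the paper invokes. You have simply spelled out the exponent bookkeeping that the paper leaves implicit in the phrase ``что соответствует умножению многочленов''.
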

\begin{proof}
Очевидно, что каждое остовное дерево в графе $G$ однозначно соответствует набору остовных деревьев в $\{G_i\}$, а степень вершины $v$ это в точности сумма её степеней в остовных деревьях $G_i$, что соответствует умножению многочленов.
\end{proof}

\begin{lemma}[о домножении]  \label{lm:multyplying}
Пусть $P_{G,w}$ --- стабильный многочлен взвешенного графа $G$.
Тогда домножение всех ребер, инцидентных некоторой вершине $v$, на вещественное $c>0$ не меняет взвешенную стабильность $G$.
\end{lemma}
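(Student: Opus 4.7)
Мой план состоит в том, чтобы свести утверждение к прямой замене переменной. Без ограничения общности будем считать, что $v = v_1$, и обозначим через $w'$ функцию весов, отличающуюся от $w$ домножением на $c$ всех весов рёбер, инцидентных $v$.

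Первый шаг --- разобраться, как меняется вклад каждого остовного дерева $T \in S(G)$ в многочлен. Поскольку в $T$ ровно $\deg_T(v)$ рёбер инцидентно $v$, произведение $\prod_{e \in T} w'(e)$ отличается от $\prod_{e \in T} w(e)$ ровно множителем $c^{\deg_T(v)}$. Далее, воспользовавшись равенством
\[
c^{\deg_T(v)} \cdot x_1^{\deg_T(v)-1} = c \cdot (cx_1)^{\deg_T(v)-1},
\]
я соберу общий множитель $c$ впереди суммы по $T$ и перепишу оставшуюся сумму через подстановку $x_1 \mapsto cx_1$ в исходном многочлене. Это даст тождество
\[
P_{G,w'}(x_1, x_2, \ldots, x_n) = c \cdot P_{G,w}(cx_1, x_2, \ldots, x_n).
\]

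Наконец, поскольку $c > 0$, преобразование $z \mapsto cz$ переводит верхнюю полуплоскость $\mathbb{H}$ в себя, а умножение на ненулевую константу не влияет на наличие нулей. Поэтому $P_{G,w'}$ не обнуляется на $\mathbb{H}^n$ тогда и только тогда, когда не обнуляется $P_{G,w}$ (обратное направление получается применением аналогичной операции с коэффициентом $1/c$). Каких-либо существенных технических препятствий в доказательстве не возникает: единственное место, требующее внимания, --- это корректный подсчёт степени $c$ в произведении весов ребер остовного дерева, после чего утверждение сводится к одной линейной замене.
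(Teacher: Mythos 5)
Ваше доказательство верно и по существу совпадает с доказательством в статье: в обоих случаях домножение весов при вершине $v$ сводится к замене $x_v \mapsto c x_v$ (сохраняющей $\mathbb{H}$ при $c>0$) с точностью до ненулевого постоянного множителя. Ваш подсчёт константы ($P_{G,w'} = c\cdot P_{G,w}(cx_v,\ldots)$) даже аккуратнее, чем формула в статье, но для стабильности это различие несущественно.
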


\begin{proof}
Новый многочлен будет стабильным тогда же, когда и старый, поскольку домножение соответствует замене переменной $x_v$ в $v$, на $cx_v$:
\[
P_{new(v,c)}(x_1,x_2,\dots, x_v, \dots, x_n)=\frac{P(x_1, \dots, cx_v, \dots, x_n)}{c}, 
\]
и $cx_v\in \mathbb{H}$ тогда и только тогда, когда $x_v\in \mathbb{H}$.
\end{proof}

\subsection{Любой взвешенно-стабильный граф получается операциями}

По лемме~\ref{lm:multyplying} мы можем домножать все ребра вокруг одной вершины на положительную константу, приводя весовую функцию к удобному виду.

\begin{lemma}
\label{lm:support}
Пусть для некоторого графа $G$ и функции весов на нем $w$ многочлен $P_{G,w}$ стабильный. Тогда $G$ стабильный как невзвешенный граф.
\end{lemma}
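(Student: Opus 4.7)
Докажем утверждение от противного. Предположим, что $G$ не является стабильным как невзвешенный граф. По теореме~\ref{th:main} это эквивалентно тому, что $G$ содержит один из запрещённых индуцированных подграфов из характеризации~(v): цикл длины не менее пяти, изумруд, дом или домино. Обозначим такой подграф через $H$, и покажем, что в этом случае $P_{G,w}$ не стабилен ни при какой положительной функции весов $w$.

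\textbf{Шаг 1 (редукция к индуцированному подграфу).} Сначала установим следующее: если $P_{G,w}$ стабилен и $H = G[U]$ --- связный индуцированный подграф, то $P_{H, w|_H}$ также стабилен. Удаляем вершины из $V \setminus U$ по одной. Если очередная удаляемая вершина $v$ в текущем графе не является точкой сочленения, подставляем $x_v = 0$ (предложение~\ref{pr:basic}(iii)) и получаем разложение
\[
P_{G, w}|_{x_v = 0} = \Bigl(\sum_{u \sim v} w(uv)\, x_u\Bigr) \cdot P_{G \setminus v,\, w},
\]
поскольку при $x_v = 0$ выживают лишь остовные деревья, в которых $v$ является листом. Произведение двух вещественных многочленов стабильно тогда и только тогда, когда каждый сомножитель стабилен, откуда следует стабильность $P_{G \setminus v, w}$. Если же $v$ --- точка сочленения, применяем лемму~\ref{lm:gluing}: многочлен раскладывается в произведение полиномов компонент, стабильность сохраняется для каждой компоненты, и в силу связности $H$ множество $U$ целиком лежит в одной компоненте, в которой $v$ уже не является точкой сочленения.

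\textbf{Шаг 2 (нестабильность запрещённых подграфов).} Покажем, что для каждого запрещённого $H$ и любой положительной функции весов $w$ на $E(H)$ многочлен $P_{H,w}$ не стабилен. По лемме~\ref{lm:multyplying} о домножении вершинные масштабирования сводят $w$ к нормальной форме с $|E(H)| - |V(H)| + 1$ существенными положительными параметрами. Для каждого запрещённого $H$ укажем подстановку значений из $\mathbb{H}$, обнуляющую $P_{H,w}$ при произвольных значениях оставшихся параметров. Например, для $C_5$ с нормализованными весами $(W, 1, 1, 1, 1)$ симметричная подстановка $x_1 = x_5 = a$, $x_2 = x_4 = b$, $x_3 = c$ сводит уравнение $P_{C_5, w} = 0$ к линейному относительно $c$, разрешимому в $c \in \mathbb{H}$ при произвольных $a, b \in \mathbb{H}$ и $W > 0$ (небольшая техническая трудность с граничным значением $a = 1$ снимается возмущением $a \to 1 + i\varepsilon$). Аналогичные вычисления, параллельные~\cite{cherkashin2023stability} в невзвешенном случае, обрабатывают длинные циклы $C_n$ при $n \geq 5$, изумруд, дом и домино.

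Объединяя шаги, получаем противоречие: стабильность $P_{G,w}$ вместе с шагом 1 влечёт стабильность $P_{H, w|_H}$, что противоречит шагу 2. Основным препятствием является именно второй шаг --- необходимо построить аналитическое свидетельство нестабильности, равномерно действующее на всём пространстве нормализованных весов. Здесь ключевую роль играет лемма о домножении, резко сокращающая размерность пространства параметров весов для каждого запрещённого подграфа.
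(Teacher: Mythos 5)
Your overall architecture coincides with the paper's: restrict stability to the forbidden induced subgraph and then show that no positive weighting of a long cycle, house, domino or gem is stable. Your Step 1 is correct and even more carefully written than in the paper (which just invokes утверждение~\ref{pr:basic}(iii)): the factorization $P_{G,w}|_{x_v=0}=\bigl(\sum_{u\in N(v)}w(uv)x_u\bigr)P_{G\setminus v,w}$ together with лемма~\ref{lm:gluing} for cut vertices does the job, modulo the standard fact that factors of a stable polynomial are stable.

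The genuine gap is Step 2, which is the actual content of the lemma and which you do not prove: you only assert that for each forbidden subgraph there is a substitution from $\mathbb{H}$ annihilating $P_{H,w}$ uniformly in the normalized weights, sketch it for $C_5$, and defer the even cycles, house, domino and gem to «аналогичные вычисления» — you yourself call this «основным препятствием». Moreover, the one example you give is off: for $C_5$ (and for any non-bipartite forbidden subgraph, in particular the house and the gem) the vertex rescalings of лемма~\ref{lm:multyplying} normalize \emph{all} weights to $1$, so the number of essential parameters is $|E|-|V|$, not $|E|-|V|+1$, and the residual parameter $W$ in your $C_5$ computation is spurious; conversely, for the bipartite cases (even cycles, domino), where a genuine invariant of the rescaling action survives, you give no argument at all. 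The paper closes exactly this hole by a different mechanism: instead of exhibiting a uniform zero, it shows that stability of small induced subgraphs \emph{forces} the normalized weights to equal $1$ — the explicit root computation for a weighted $C_4$ with three unit edges (the substitution $x_3=x_4=1$, $x_1=i$), the four-vertex case analysis for the domino and the gem, and, for long cycles, the remark that after normalizing all edges but one the weighted polynomial differs from the unweighted one by a single monomial vanishing at the unweighted witness point — and only then invokes the instability of the unweighted forbidden graphs from Theorem~\ref{th:main}. Without either these substitutions or such a rigidity argument, your Step 2 is an unproven claim, so the proof is incomplete.
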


\begin{proof}
Предположим противное. Тогда $G$ не является стабильным как невзвешенный граф, то есть по теореме~\ref{th:main} не является дис\-тан\-ци\-он\-но-наследуемым, значит по эквивалентному определению~(v) $G$ содержит длинный цикл, домино, дом или самоцвет как индуцированный подграф. Разберем эти случаи, и покажем, что в каждом из них путем нескольких операций домножения всех ребер, инцидентных вершине на положительное число можно сделать веса на индуцированном подграфе единичными. Лемма~\ref{lm:multyplying} и утверждение~\ref{pr:basic}~(iii) дают противоречие.

\paragraph{Циклы.}
Цикл длины пять сводится к циклу, в котором все ребра имеют веса $1$ при помощи операций домножения.
Рассмотрим цикл длины хотя бы шесть, не имеющих хорд.
Домножим во всех вершинах кроме одной последовательно, так, чтобы веса всех ребер, кроме одного, стали $1$. Если последнее ребро это $v_1v_n$, то вес всех деревьев, кроме того, которое получается при удалении $v_1v_n$ это $w(v_1v_n)$.
Тогда можно просто переписать доказательство для обычного цикла без весов, потому что слагаемое, которое отличается, все равно в нашей подстановке оказывается нулем.

\paragraph{Домино.}
Пусть ребро, не входящее в цикл $v_1v_2 \dots v_6v_1$, это $v_1v_4$ (см. рис.~\ref{fig:forbidden}).
Последовательными применяя лемму~\ref{lm:multyplying}, добьемся равенств
\[
1=w(v_2v_1)=w(v_1v_6)=w(v_6v_5)=w(v_5v_4)=w(v_4v_3).
\]
Тогда рассмотрев набор $v_1,v_6,v_5,v_4$ получим, что $w(v_4v_1)=1$; аналогично для набора $v_2v_1v_5v_3$ имеем, что $w(v_2v_3)=1$. Получилось обычное (невзвешенное) домино, а оно нестабильно. 

\paragraph{Дом.}
Применим лемму~\ref{lm:multyplying} так, чтобы ребра цикла получили вес 1. Теперь рассмотрим подграф $C_4$, образованный вершинами $v_1v_2v_4v_5$.

Покажем, что в стабильном $C_4$ можно нормировками сделать все веса равными.
Давайте за три нормировки сделаем веса всех ребер, кроме одного, единицами.
Необходимо показать, что последнее ребро (пусть это $v_4v_1$) имеет вес $1$.
При этом наш многочлен это 
\[
x_2x_3+w(v_1v_4)(x_1x_2+x_3x_4+x_1x_4).
\]
Пусть  $w(v_1v_4)=\frac{1}{t}$, тогда наш многочлен это
\[
\frac{1}{t}(tx_2x_3+x_1x_2+x_3x_4+x_1x_4).
\]
Если $t>1$, то рассмотрим подстановку $x_3=x_4=1$. Тогда получится, что
\[
x_2(tx_1+1)+x_1+1=0.
\]
Но у этого многочлена есть корень  
\[
x_1=i,  \quad \quad x_2=\frac{-(1+i)(1-ti)}{t^2+1}=\frac{-(t+1)+(t-1)i}{t^2+1},
\]
что противоречит стабильности. Второй случай ($t<1$) разбирается аналогично.
Получилось, что $t=1$, что мы и хотели.

Значит перед нами невзвешенный дом, а он не стабилен.

\paragraph{Самоцвет.} Обозначим $v$ вершину степени 4.
Давайте отнормируем так, чтобы все веса всех ребер, инцидентных $v$ стали $1$. Тогда оставшиеся три ребра одинаковые, а значит если рассмотреть индуцированный граф на множестве вершин без $v$, получится, что эти одинаковые ребра имеют вес $1$, то есть перед нами обычный самоцвет (потому что получается, что в множестве $\{x,x^2,0\}$ есть два равных числа, и $x\neq 0$). Но обычный самоцвет нестабилен. 

\end{proof}

Вернемся к доказательству теоремы~\ref{mainweighted}. 
Заметим, что если граф $G$ не двусвязен, то достаточно доказать утверждение теоремы для каждой компоненты двусвязности.

Назовем пару вершин $x_1$ и $x_2$ \textit{стягиваемой}, если их окрестности в графе $G$ совпадают (возможно по модулю них самих) и отношение
\[
\frac{w(vx_1)}{w(vx_2)}
\]
одинаковое для всех вершин $v$, соединенных с $x_1$ и $x_2$. 
Заметим, что домножение весов всех ребер, инцидентных вершине, на положительную константу не меняет множество стягиваемых пар графа. 
В доказательстве мы будем часто пользоваться этим свойством, применяя лемму~\ref{lm:multyplying}.

Мы докажем по индукции следующее, несколько более сильное, утверждение.

\begin{proposition}
Пусть для некоторого двусвязного графа $G$ на хотя бы четырех вершинах и функции весов на нем $w$ многочлен $P_{G,w}$ стабильный. 
Тогда в $G$ найдутся хотя бы две непересекающиеся стягиваемые пары.
\label{mainProp}
\end{proposition}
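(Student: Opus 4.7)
By Lemma~\ref{lm:support}, the graph $G$ is stable as an unweighted graph, hence distance-hereditary. The first ingredient would be a purely structural statement: every biconnected distance-hereditary graph on at least four vertices contains at least two disjoint unweighted twin pairs, i.e.\ disjoint pairs $\{x_1,x_2\}$ with $N_G(x_1)\setminus\{x_2\}=N_G(x_2)\setminus\{x_1\}$. I would derive this from the equivalent characterization~(vi): any valid construction sequence producing a biconnected graph on at least four vertices must end with a duplication step (a pendant addition would create a cut vertex or a degree-one vertex, contradicting biconnectivity together with $|V|\geq 4$); applying this observation one step earlier inside the construction then yields a second, vertex-disjoint twin pair.

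The main task is then to promote each unweighted twin pair $\{x_1,x_2\}$ to a weighted contractible pair, that is, to show that the common neighbours $v_1,\dots,v_d$ satisfy a single common ratio $w(v_ix_1)/w(v_ix_2)=c$. For this I would isolate the relevant four-vertex stability constraint, in the spirit of the $C_4$/house argument inside Lemma~\ref{lm:support}. Concretely, fix two common neighbours $v_i,v_j$, use Lemma~\ref{lm:multyplying} at the vertices $x_1,v_i,v_j$ to normalise $w(v_ix_1)=w(v_jx_1)=w(v_ix_2)=1$, and then substitute real values for the remaining variables via Proposition~\ref{pr:basic}(iii) (or differentiate them out via (ii)) until only the variables $x_{x_1},x_{x_2},x_{v_i},x_{v_j}$ remain. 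If the chosen substitution is made at a distance-hereditary substructure where the dependence on the remaining edge weight $t:=w(v_jx_2)$ is affine and isolates the $C_4$ spanning trees through $\{x_1,x_2,v_i,v_j\}$, then the resulting four-variable polynomial is a positive-coefficient version of the polynomial analysed in the house case of Lemma~\ref{lm:support}, and stability forces $t=1$, i.e.\ $w(v_jx_2)=w(v_ix_2)$. Varying the choice of $v_j$ among the common neighbours yields the desired common ratio.

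Once a single contractible pair is produced, I would normalise weights so that $w(v_ix_1)=w(v_ix_2)$ and then apply Lemma~\ref{lm:copy} in reverse: the polynomial factors as $P_{G\setminus x_2,w'}(\ldots,x_{x_1}+x_{x_2},\ldots)$ times a linear form (or plus a $p(x_{x_1}+x_{x_2})$ term in the true-twin case), so stability transfers to a strictly smaller biconnected weighted graph. Induction then gives a contractible pair in $G\setminus x_2$ disjoint from $\{x_1,x_2\}$, which lifts back to $G$. The base cases $|V|=4$ (the graphs $C_4$, $K_4-e$, $K_4$) would be verified by direct inspection, each admitting two disjoint contractible pairs under the weighted stability hypothesis.

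\textbf{Main obstacle.} The chief difficulty is the extraction of the multiplicative ratio condition from global stability. Unlike the unweighted setting, we cannot simply pass to the induced subgraph on $\{x_1,x_2,v_i,v_j\}$, since the surrounding graph contributes spanning trees whose weights mix nontrivially. Identifying a substitution or differentiation that both preserves stability and reveals a clean four-vertex obstruction of the house type is the delicate part, and will likely require a case split depending on whether $x_1x_2\in E(G)$ (true vs.\ false twins) and on the adjacencies among the common neighbours $v_i,v_j$.
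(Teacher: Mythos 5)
Your plan correctly identifies the skeleton (reduce to unweighted distance-hereditary structure via Lemma~\ref{lm:support}, find twins from characterization~(vi), promote them to weighted contractible pairs, then factor and induct), but the step you yourself flag as the ``main obstacle'' --- extracting the common ratio $w(v_ix_1)/w(v_ix_2)$ from stability --- is precisely the heart of the proposition, and your proposed resolution does not work. First, your worry that one cannot pass to induced subgraphs is unfounded: substituting $x_v=0$ factors $P_{G,w}$ as $P_{G\setminus v,w}$ times a linear form, so every connected induced subgraph of a weighted-stable graph is weighted-stable; the paper uses four-vertex induced subgraphs freely. But this makes the real difficulty visible: the four-vertex constraint coming from the $K_4$/$C_4^+$ base case is only that \emph{some} two of the three products of opposite edge weights coincide, not the specific equality you want. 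So a single quadruple $\{x_1,x_2,v_i,v_j\}$ never ``forces $t=1$'' as in the house argument (that argument relies on $C_4$ having no diagonals); the wrong equality may hold instead, and ruling it out requires a global combinatorial analysis over many quadruples. The paper does this via a separate induction for complete graphs (Lemma~\ref{lm:complete}), a gluing lemma for contractibility (Lemma~\ref{lm:covering}), and, crucially, by first deleting one twin $v'$, applying the induction hypothesis to $H=G\setminus v'$ to obtain \emph{two} disjoint contractible pairs there, and then a lengthy case analysis on how $v,v'$ interact with those pairs. Your proposal has no substitute for any of these ingredients.

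Two further gaps: (1) your derivation of two \emph{disjoint} unweighted twin pairs by ``applying the observation one step earlier in the construction'' is unjustified --- the graph one step earlier need not be biconnected, and the twin pair it yields may intersect the first one; in the paper the existence of the second disjoint pair is exactly what the inductive statement delivers, after separately handling the case where $G\setminus v'$ is not biconnected. (2) In your final induction step, $G\setminus x_2$ need not be biconnected, so the induction hypothesis does not apply directly; the paper devotes a separate case (via the four-vertex base applied to quadruples straddling two blocks) to this situation. As written, the proposal is a plausible outline with the decisive steps missing.
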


\subsection{База для четырех вершин}
\label{subsec:baza}

Двусвязные графы на четырех вершинах это полный граф $K_4$, цикл $C_4$ и цикл с диагональю $C_4^+$.

\paragraph{Случай $C_4$} разобран при рассмотрении дома в предыдущем разделе.

\paragraph{Случай $K_4$.} Путем домножений в вершинах 1, 2 и 3 можно добиться того, чтобы ребра из вершины 4 имели единичный вес.
По лемме~\ref{lm:multyplying} данные операции не влияют на свойство стабильности графа.
Обозначим $e_1 = w(2,3)$, $e_2 = w(1,3)$, $e_3 = w(1,2)$. В новых обозначениях 
\[
P_{K_4,w}=x_4^2+x_4(x_1(e_2+e_3)+x_2(e_1+e_3)+x_3(e_1+e_2))+ (x_1+x_2+x_3)(x_1e_2e_3+x_2e_1e_3+x_3e_1e_2).
\]
Выражение является квадратичным относительно переменной $x_4$, а значит при любой вещественной подстановке $x_1,x_2,x_3$ дискриминант $D$ соответствующего квадратного трехчлена хотя бы $0$. Имеем 
\[
0 \leq (x_1(e_2+e_3)+x_2(e_1+e_3)+x_3(e_1+e_2))^2-4(x_1+x_2+x_3)(x_1e_2e_3+x_2e_1e_3+x_3e_1e_2)=
\]
\[
=x_1^2(e_2-e_3)^2+x_2^2(e_3-e_1)^2+x_3^2(e_1-e_2)^2-2x_1x_2(e_2-e_3)(e_3-e_1)-2x_2x_3(e_3-e_1)(e_1-e_2)-2x_3x_1(e_1-e_2)(e_2-e_3)
\]
при любых вещественных $x_1,x_2,x_3$. Если никакие две переменные из $e_1,e_2,e_3$ не совпали, то мы можем подставить 
\[
x_1=\frac{1}{e_2-e_3}, x_2=\frac{1}{e_3-e_1}, x_3=\frac{1}{e_1-e_2},
\]
что дает $D =-3$. Это противоречит неотрицательности дискриминанта. Значит из трех величин $e_1,e_2,e_3$ какие-то две совпали, что влечет совпадение каких-то двух произведений противоположных ребер. 
Утверждение для $K_4$ доказано.

\paragraph{Случай $C_4^+$} аналогичен предыдущему, если подставить $e_1 = 0$.

\subsection{Переход}

\begin{lemma}
Пусть $G$ и $w$ --- граф и функция весов на нем. Пусть также в $G$ найдутся два индуцированных подграфа $G_1, G_2$, в объединении покрывающие $G$, и некоторые три вершины $x_1,x_2,y$, принадлежащие обоим подграфам, а так же в $G$ (а значит и в $G_1$ и в $G_2$) есть ребра $x_1y$ и $x_2y$, и пара $x_1x_2$ стягиваема и в $G_1$, и в $G_2$. Тогда пара $x_1x_2$ стягиваема и в $G$.
\label{lm:covering}
\end{lemma}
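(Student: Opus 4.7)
The plan is to verify the two defining conditions of a contractible pair for $x_1,x_2$ in $G$: (a) equality of the neighborhoods modulo $\{x_1,x_2\}$, and (b) constancy of the ratio $w(vx_1)/w(vx_2)$ over all common neighbors $v$. Both conditions are local and follow by gluing the data given on $G_1$ and on $G_2$, with the vertex $y$ playing the role of an anchor that synchronizes the two ratios.

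First I would check (a). Take any vertex $v \in V(G)\setminus\{x_1,x_2\}$ adjacent to $x_1$ in $G$. Since $G=G_1\cup G_2$ (as vertex sets) and $G_1,G_2$ are induced subgraphs of $G$, the edge $vx_1$ lies entirely in $G_1$ or entirely in $G_2$. By contractibility of $\{x_1,x_2\}$ in that $G_i$, the vertex $v$ is adjacent to $x_2$ in $G_i$, and hence in $G$. The symmetric argument starting from a neighbor of $x_2$ yields $N_G(x_1)\setminus\{x_1,x_2\} = N_G(x_2)\setminus\{x_1,x_2\}$.

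Next I would check (b). Let $r_i$ denote the common value of $w(vx_1)/w(vx_2)$ over all $v$ adjacent to both $x_1$ and $x_2$ in $G_i$, which exists by hypothesis for $i=1,2$. Because $G_i$ is induced and contains $x_1,x_2,y$ together with the edges $x_1y$ and $x_2y$, the weights $w(x_1y)$ and $w(x_2y)$ are the same number whether viewed inside $G_1$, inside $G_2$, or inside $G$. Therefore
\[
r_1 \;=\; \frac{w(x_1y)}{w(x_2y)} \;=\; r_2,
\]
so set $r := r_1 = r_2$. Now any common neighbor $v$ of $x_1$ and $x_2$ in $G$ is, by the same induced-subgraph argument as in (a), a common neighbor in $G_1$ or in $G_2$, whence $w(vx_1)/w(vx_2) = r$. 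Combining (a) and (b) gives that $\{x_1,x_2\}$ is contractible in $G$.

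The only place where something could go wrong is the matching of the two ratios $r_1, r_2$, and this is exactly what the hypothesis of a shared common neighbor $y$ with edges $x_1y, x_2y$ in \emph{both} subgraphs rules out; without such a $y$ one could glue a copy with ratio $r_1$ to a copy with ratio $r_2\ne r_1$ and the lemma would fail. So the step I expect to be least automatic is simply recognizing that $y$ must belong to both $G_1$ and $G_2$ (which is explicitly assumed) and that its two incident edges, by inducedness, are present in both subgraphs — everything else is a direct application of the definition.
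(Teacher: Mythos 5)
Your proof is correct and follows essentially the same route as the paper's: every vertex of $G$ lies in $G_1$ or $G_2$, inducedness transfers the neighborhood condition, and the common neighbor $y$ forces the two ratios to coincide, so every ratio $w(vx_1)/w(vx_2)$ equals $w(yx_1)/w(yx_2)$. Your write-up merely spells out the neighborhood check and the role of $y$ more explicitly than the paper does.
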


\begin{proof}
Поскольку подграфы $G_1$ и $G_2$ содержат все вершины $G$, вершины $x_1$ и $x_2$ графски эквивалентны в $G$. Рассмотрим произвольную вершину $v$, соединенную с $x_1$ и $x_2$; для нее выполняется
\[
\frac{w(vx_1)}{w(vx_2)}=\frac{w(yx_1)}{w(yx_2)}, 
\]
так как $v$ принадлежат $G_1$ или $G_2$, а в них пара $x_1x_2$ стягиваема. Таким образом, отношение $\frac{w(vx_1)}{w(vx_2)}$ не зависит от выбора $v$. 
\end{proof}

Пусть $G$ --- граф, и $w$ --- весовая функция на нем, такие что $P_{G,w}$ --- стабильный. По лемме~\ref{lm:support} и теореме~\ref{th:main} граф $G$ --- дистанционно-наследуемый. По определению~(vi) класса дистанционно-наследуемых графов $G$ можно построить путем копирования и добавления висячих вершин, и в силу двусвязности $G$ последней операцией было копирование. Это значит, что в графе $G$ есть вершины $v$ и $v'$, такие, что их окрестности совпадают (по модулю самих вершин $v$ и $v'$), то есть
\[
N_G(v) \setminus \{v'\} = N_G(v') \setminus \{v\}.
\]

Рассмотрим взвешенный граф $H = G[V\setminus \{v'\}]$.
По утверждению~\ref{pr:basic}(iii) и из-за двусвязности $G$, граф $H$ --- взвешенно-стабильный, так как удаление $v'$ соответствует подстановке $x_{v'} = 0$.

\paragraph{Случай недвусвязного $H$.}  Двусвязность $G$ влечет, что в $H$ не более одной вершины сочленения, и эта вершина это $v$.
Тогда давайте докажем, что $v$ и $v'$ стягиваемы в $G$ с учетом веса.
Действительно, пусть окрестности $v$ в разных компонентах двусвязности $H$ это $A_1, A_2, \dots, A_k$. Напомним, что операция домножения не влияет на условие стягиваемости вершин и отнормируем все так, чтобы все ребра инцидентные $v$, кроме, возможно, ребра из $v'$, имели вес $1$. Покажем, что все веса для вершины $v'$, кроме, возможно, $w(vv')$, равны между собой.
Для этого достаточно показать, что для произвольных смежных с $v$ и $v'$ вершин $x \in A_i, y \in A_j, i \neq  j$ верно равенство $w(v'x) = w(v'y)$. Для этого рассмотрим четверку $v, v', x, y$. Тогда в силу того, что $x$ и $y$ в разных компонентах двусвязности, $G$ не содержит ребра $xy$, а значит эта четверка это либо $C_4$, либо $C_4^+$. Оба случая рассмотрены в базе и в них стягиваемые пары это $vv'$ и $xy$.
Из стягиваемости пары $xy$ следует нужное нам равенство.
Таким образом для недвусвязных $H$ одну стягиваемую пару в $G$ мы нашли. 

Пусть компоненты двусвязности в $H$ это $H_1, H_2, \dots, H_m$, $m \geq 2$ (они все содержат $v$).
Тогда рассмотрим следующие случаи: 
\begin{itemize}
    \item Существует такое $k$, что $|H_k| \geq 4$. Тогда поскольку $|H_k| < |H|$ из предположения индукции следует, что в $H_k$ есть две непересекающиеся стягиваемые пары, тогда одна из них не содержит $v$, значит она стягиваема и в $G$. Действительно, они остались одинаковыми с точки зрения графа, и даже если при копировании появилась новая вершина в окрестности, то это только $v'$, и в силу того, что $vv'$ --- стягиваемая пара в $G$, по лемме~\ref{lm:copy} вершина $v'$ не портит взвешенную стабильность. 
    \item Найдется такое $k$, что $|H_k|=3$. Тогда $H_k$ --- цикл $vu_1u_2$ на трех вершинах, то есть вершины $u_1$ и $u_2$ имеют совпадающую окрестность (за исключением друг друга), состоящую из единственной вершины $v$. То есть $u_1u_2$ --- стягиваемая пара в $H_k$, а значит в $H$ и $G$ по тем же причинам.
    \item Каждый из графов $H_k$ содержит ровно две вершины, то есть каждое $H_i$ является ребром; обозначим его $u_iv$. Тогда в $G$ есть стягиваемая пара $u_1u_2$, потому что у них совпадают окрестности --- это вершины $v,v'$, и $\frac{w(u_1v)}{w(u_2v)}=\frac{w(u_1v')}{w(u_2v')}$ в силу стягиваемости $vv'$ в $G$.
\end{itemize}   
Таким образом, если граф $H$ получился не двусвязным, то теорема доказана.

\paragraph{Случай двусвязного $H$} значительно труднее.

\begin{lemma}
Утверждение~\ref{mainProp} справедливо для полного взвешенного графа $G$ и функции весов $w$ на нем.
\label{lm:complete}
\end{lemma}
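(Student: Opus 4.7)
The strategy is induction on $n\ge 4$, with the base case $n=4$ being exactly the $K_4$ analysis of Subsection~\ref{subsec:baza}. To strengthen that base to the form demanded by Proposition~\ref{mainProp}, I note that whenever the discriminant argument forces two of $e_1,e_2,e_3$ to coincide---say $e_1=e_2$, i.e.\ $w(13)=w(23)$ under the normalization $w(14)=w(24)=w(34)=1$---then both pairs $(1,2)$ and $(3,4)$ become contractible \emph{simultaneously}, supplying the two required disjoint pairs. A symmetric observation handles the cases $e_1=e_3$ and $e_2=e_3$.

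For the inductive step with $n\ge 5$, I would first use Lemma~\ref{lm:multyplying} to normalize so that $w(vn)=1$ for every $v\ne n$; this preserves both weight-stability and the set of contractible pairs. Substituting $x_n=0$ (permitted by Proposition~\ref{pr:basic}(iii)) produces a nonzero factorization of $P_{K_n,w}$ through $P_{K_{n-1},w}$, so the induced complete graph on $V\setminus\{n\}$ is again weight-stable. The inductive hypothesis then supplies two disjoint pairs $(a,b)$ and $(c,d)$ that are contractible \emph{within} $K_{n-1}$, i.e.\ there are positive constants $r_{ab}, r_{cd}$ with $w(av)=r_{ab}\,w(bv)$ for all $v\in V\setminus\{a,b,n\}$ and analogously for $(c,d)$.

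The core of the argument is to \emph{lift} these pairs to contractibility in $K_n$, which under the normalization amounts to proving $r_{ab}=1$ (and, by symmetry, $r_{cd}=1$). The key tool is the $K_4$ base case applied to each four-vertex subgraph $\{a,b,v,n\}$, obtained by zeroing out all other variables: it forces two of $w(ab), w(av), w(bv)$ to coincide for every $v\in V\setminus\{a,b,n\}$. Combined with $w(av)=r_{ab}\,w(bv)$, this generically yields $r_{ab}=1$ at once, because the equality $w(av)=w(bv)$ among the three immediately gives $r_{ab}=1$. The main obstacle is the degenerate configuration in which $w(av)=w(bv)$ is realised for no~$v$: then the vertices of $V\setminus\{a,b,n\}$ split into at most two ``types'', with $(w(av),w(bv))$ taking only the values $(w(ab), w(ab)/r_{ab})$ or $(r_{ab}\,w(ab), w(ab))$. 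I expect this exceptional configuration to be resolved by combining the $K_4$ constraints on four-subsets $\{a,b,v_1,v_2\}\subset V\setminus\{n\}$ with the four-point property~(iv) of Section~\ref{dh}, either to rule it out entirely or, when it does occur, to exhibit an alternative pair of disjoint contractible pairs in $K_n$---plausibly involving $n$ itself paired with a vertex whose edges to the remaining vertices all share the same weight.
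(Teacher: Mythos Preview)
Your overall shape (induction on $n$ with the $K_4$ discriminant as base, plus deleting a vertex and applying $K_4$ to four-subsets) matches the paper's. The observation that $e_1=e_2$ in the base yields \emph{two} disjoint contractible pairs is correct and needed.

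The gap is exactly where you flag it: the ``degenerate configuration'' in which, for the pair $(a,b)$ with ratio $r_{ab}\neq 1$, no vertex $v$ satisfies $w(av)=w(bv)$, so every $v\in V\setminus\{a,b,n\}$ is of one of two types. You hope this can be eliminated via further $K_4$ constraints or the four-point condition~(iv), but you give no argument, and I do not see a direct one along these lines. Note in particular that even when $(a,b)$ does lift (i.e.\ $r_{ab}=1$), your argument does not yet force $(c,d)$ to lift as well, so you may be stuck with only one contractible pair in $K_n$; and condition~(iv) is a statement about \emph{unweighted} distances, so it carries no information inside a complete graph.

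The paper closes this gap by a different device. It argues by contradiction, splitting into two cases: (1) $G$ has \emph{no} contractible pair, and (2) $G$ has exactly one. Case~(2) is short: normalize the known pair $v_1v_2$ so that $w(uv_1)=w(uv_2)$ for all $u$, delete $v_2$, apply induction, and observe that any pair in $G\setminus v_2$ not containing $v_1$ automatically lifts back. Case~(1) is the substantial one: delete a vertex $x$, get inductively two disjoint pairs $u_1u_2$, $v_1v_2$ in $G\setminus x$, normalize so all six edges among $\{u_1,u_2,v_1,v_2\}$ have weight $1$; the assumption that neither pair lifts pins down $w(xu_i),w(xv_i)$ via $K_4$'s on $\{x,v_i,u_1,u_2\}$. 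Then, crucially, a \emph{second} distinguishing vertex $y$ is brought in (from the non-contractibility of, say, $u_1v_1$), and the six $K_4$ constraints on $\{x,y\}\cup\{\text{two of }u_1,u_2,v_1,v_2\}$ are combined into a small system in three parameters $a,b,c$, which is shown to be inconsistent. This two-auxiliary-vertex analysis is what replaces your unresolved degenerate case.
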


\begin{proof}
Несложно видеть, что все графы на не более чем трех вершинах являются стабильными. 
Оставшееся доказательство является индуктивным, где база для четырех вершин доказана в разделе~\ref{subsec:baza}.

Переход.  Предположим, противное, тогда $G$ содержит не более одной стягиваемой пары.

Разберем случай, когда $G$ вообще не содержит стягиваемых пар.
Давайте удалим произвольную вершину $x$. 
По утверждению~\ref{pr:basic}(iii) оставшийся взвешенный полный граф $F$ является вещественно-стабильным, значит по предположению индукции в $F$ есть две непересекающиеся стягиваемые пары, пусть это $v_1v_2$ и $u_1u_2$.
Применим несколько раз лемму~\ref{lm:multyplying}, так чтобы $w(u_1u_2) = w(v_1u_2) = w(v_1u_1) = w(v_2u_2) = 1$ (для этого нужно сначала приравнять за три нормировки веса ребер, ведущих в $u_2$, а потом приравнять к ним вес в оставшемся ребре за счет нормировки в $u_2$). Тогда $w(v_2u_1) = 1$ за счет стягиваемости пары $u_1u_2$ в $F$. 

Получаем, что для любой вершины $q$, которую мы еще не рассмотрели, стягиваемость пар $u_1u_2$ и $v_1v_2$ в графе $F$ влечет $w(qu_1)=w(qu_2)$ и $w(qv_1)=w(qv_2)$.

Мы предположили, что в исходном графе ни одна из пар $u_1u_2$ и $v_1v_2$ не стягивается, а значит $w(xv_1) \neq w(xv_2)$ и $w(xu_1) \neq w(xu_2)$. Рассмотрим графы $G[\{x,v_1,u_1, u_2\}]$ и $G[\{x, v_2, u_1, u_2\}]$. Они стабильные, а значит из трех чисел $w(xv_1), w(xu_1), w(xu_2)$ есть два равных, и из $w(xv_2), w(xu_1),w(xu_2)$ тоже. Тогда получается, что либо $w(xv_1)=w(xu_1)$ и $w(xv_2)=w(xu_2)$, либо $w(xv_1)=w(xu_2)$ и $w(xv_2)=w(xu_1)$. Не умаляя общности, можно считать, что мы имеем дело с первым случаем.

Давайте покажем, что $w(v_1v_2)=1$.
Предположим, что $w(v_1v_2)= t \neq  1$, и пускай $w(xv_1)=w(xu_1)=a$ и $w(xv_2)=w(xu_2)=b$. Рассматривая графы $G[\{x,v_1,v_2, u_1\}]$ и $G[\{x, v_1, v_2, u_2\}]$ имеем, что среди $b, a, at$ есть равная пара, и среди $a, b, bt$ есть равная пара. Но заметим, что поскольку $a \neq  b$ и $t \neq 1$, получается, что $a = bt$ и $b = at$. Но так не бывает. Противоречие, значит $w(v_1v_2)=1$.

Тогда заметим, что поскольку в нашем графе нельзя стянуть $u_1v_1$, есть вершина $y$, для которой $w(yu_1)\neq w(yv_1)$. Тогда $y$ это новая вершина, потому что для $x, v_2, u_2$ эти веса равны. Тогда мы знаем, что $w(yv_1)=w(yv_2)$ и $w(yu_1)=w(yu_2)$.

Применяя лемму~\ref{lm:multyplying} в вершинах $x$ и $y$ так, чтобы $w(xv_1)=w(xu_1)=1$ и $w(yu_1)=w(yu_2)=1$. Тогда все веса зависят от трех переменных: $a=w(xv_2)=w(xu_2)$, $b=w(yv_1)=w(yv_2)$, $c=w(xy)$, и также мы знаем, что $a,b \neq 1$.
Рассматривая индуцированные подграфы на всевозможных четверках вершин, состоящих из $x, y$ и еще двух вершин из $\{u_1,v_1,u_2,v_2\}$, получаем, что в каждом из следующих мультимножеств есть равные числа:
\[
\{a,c,1\}, \{b,c,1\}, \{a,b,c\}, \{1,ab,c\}, \{c,a,ab\}, \{c,b,ab\}.
\]
Из первых двух множеств имеем, что либо $c=1$, либо $a=b=c$.
В первом случае из третьего мультимножества имеем, что $a=b$, и тогда пятое мультимножество $\{a, a^2, 1\}$ содержит два равных числа.
Во втором случае четвертого мультимножество $\{1,a,a^2\}$ содержит два равных числа.
В обоих случаях мы получаем противоречие, так как $a \notin\{0,1\}$.

Осталось рассмотреть случай, когда $G$ содержит ровно одну стягиваемую пара $v_1v_2$. 
Применяя лемму~\ref{lm:multyplying}, отнормируем ребра инцидентные $v_2$, так, чтобы $w(uv_2)=w(uv_1)$ для какой-нибудь вершины $u$. Тогда $w(uv_2)=w(uv_1)$ для любой вершины $u$, потому что пара  $v_1v_2$ стягиваемая.
По предположению индукции в графе $G[V\setminus \{v_2\}]$ отыщутся две непересекающиеся стягиваемые пары, а значит одна из них не содержит $v_1$. Пусть эта пара это $u_1u_2$. Тогда заметим, что она стягиваема и в исходном графе, потому что значение $\frac{w(u_1x)}{w(u_2x)}$ одинаковое при всех $x \neq v_1$, а также совпадает при $x = v_1$ и $x = v_2$. Тогда мы нашли две непересекающиеся стягиваемые пары в графе $G$, как и хотели.

\end{proof}

Перейдем к произвольному $G$. Сначала найдем в $G$ одну стягиваемую пару.

Поскольку $H$ двусвязный, к нему применимо предположение индукции, то есть в $H$ найдутся две непересекающиеся стягиваемые пары $x_1x_2$ и $y_1y_2$. Тогда вершина $v$ либо принадлежит одной из этих пар, либо не принадлежит.

\paragraph{Случай 1.}
Вершина $v$ не принадлежит ни одной из двух выделенных стягиваемых пар $H$.
Поскольку $N_G(v) \setminus \{v'\} = N_G(v') \setminus \{v\}$ и $N_H(x_1) \setminus \{x_2\} = N_H(x_2) \setminus \{x_1\}$, ребра $x_1v$, $x_1v'$, $x_2v$, $x_2v'$ либо все есть в $G$, либо ни одного из них нет в $G$. Если их всех нет, то стягиваемость $x_1x_2$ в $H$ влечет стягиваемость в $G$ и искомая пара найдена. 
В оставшемся случае $G$ содержит эти четыре ребра, и аналогично все ребра $y_1v$, $y_1v'$, $y_2v$, $y_2v'$ также есть в $G$.
Если $G$ не содержит ребра $vv'$, то база для графа $G[\{x_1,x_2,v,v'\}]$ дает стягиваемые пары $x_1x_2$ и $vv'$, потому что на этой четверке вершин есть цикл, и нет ребра $vv'$; и тогда по лемме~\ref{lm:covering} пара $x_1x_2$ стягиваема в $G$.
Аналогично, рассматривая графы $G[\{x_1,x_2,v,v'\}]$ и $G[\{y_1,y_2,v,v'\}]$, получаем, что ребра $x_1x_2$ и $y_1y_2$ есть в $G$.

Теперь покажем, что $G$ содержит все ребра вида $x_iy_j$, $i,j \in \{1,2\}$. Аналогично предыдущему абзацу $G$ содержит либо их все, либо ни одного из них.
Предположим, что данных ребер нет, и рассмотрим граф $G[\{v,v',x_i,y_j\}]$. В нем есть цикл на всех вершинах, и нет ребра $x_iy_j$, а значит в нем стягиваемы пары $vv'$ и $x_iy_j$.
Применяя определение стягиваемости, получаем
\[
\frac{w(vx_i)}{w(v'x_i)}=\frac{w(vy_j)}{w(v'y_j)}
\]
для $i=1$, $j=1,2$ получаем, что $y_1y_2$ является стягиваемой парой в $G[\{vv'y_1y_2\}]$, из чего по лемме~\ref{lm:covering} пара $y_1y_2$ стягиваема в $G$. 
Значит все ребра вида $x_iy_j$ есть в $G$.

Тогда граф $F = G[\{x_1,x_2,y_1,y_2,v'\}]$ полный, а значит по лемме~\ref{lm:complete} в нем есть две непересекающиеся стягиваемые пары. Тогда если одна из этих пар содержит $v'$, то, не умаляя общности, это пара $v'x_1$, тогда $\frac{w(x_1y_1)}{w(v'y_1)}=\frac{w(x_1y_2)}{w(v'y_2)}$, откуда $\frac{w(x_1y_1)}{w(x_1y_2)}=\frac{w(v'y_1)}{w(v'y_2)}$, а значит пара $y_1y_2$ стягиваема в $G$.
Значит в $F$ стягиваемы две непересекающиеся пары из множества $\{x_1,x_2,y_1,y_2\}$. Если это пары $x_1x_2$ и $y_1y_2$, то $\frac{w(x_1y_1)}{w(x_2y_1)}=\frac{w(x_1v')}{w(x_2v')}$, и пара $x_1x_2$ стягиваема в $G$, что завершило бы разбор случая 1.
Не умаляя общности, осталась ситуация, в которой стягиваемыми парами в $F$ являются $x_1y_1$ и $x_2y_2$. Тогда в графе 
$G[\{x_1,x_2,y_1,y_2\}]$ есть четыре стягиваемые пары вершин, а значит, применяя лемму~\ref{lm:multyplying}, отнормируем веса так, чтобы все шесть ребер среди этих четырех вершин имели вес $1$. Из стягиваемости пар $x_1x_2$ и $y_1y_2$ в $H$ для любой вершины $u \notin \{v',x_1,x_2\}$ ребра $ux_1$ и $ux_2$ либо оба отсутствуют в $G$, либо оба есть в $G$, и тогда $w(ux_1) = w(ux_2)$ и для любой вершины $u \notin \{v',x_1,x_2\}$ ребра $uy_1$ и $uy_2$ либо оба отсутствуют в $G$, либо оба есть в $G$, и тогда $w(uy_1) = w(uy_2)$. Также из стягиваемости пар $x_1y_1$ и $x_2y_2$ в $F$ следуют равенства $w(v'y_1) = w(v'x_1)$ и $w(v'y_2) = w(v'x_2)$.

Поскольку $x_1x_2$ и $y_1y_2$ не являются стягиваемыми в $G$, $w(v'x_1) \neq w(v'x_2)$.
Рассмотрим пару $x_1y_1$. Она не стягиваема в $G$, следовательно либо найдется вершина $u \in (N_G(x_1) \Delta N_G(y_1)) \setminus \{x_1,y_1\}$, либо найдется вершина $z \in N_G(x_1) \cap N_G(y_1)$, такая что $w(zx_1) \neq w(zy_1)$ (учитывая нормировку из прошлого абзаца, делающую веса в подграфе $G[\{x_1,x_2,y_1,y_2\}]$ единичными).

В первом варианте, не умаляя общности, $ux_1 \in E, uy_1 \notin E$. 
Тогда в силу $N_H(x_1) \setminus \{x_2\} = N_H(x_2) \setminus \{x_1\}$ имеем $ux_2 \in E, uy_2 \notin E$. Тогда заметим, что если в графе $G$ нет ребра $uv'$, то в графе $G[\{v',x_1,x_2, u\}]$ есть цикл, и нет ребра $uv'$, а значит стягиваемая пара это $x_1x_2$, но это противоречит тому, что $w(x_1u)=w(x_2u)$ (т.к. $w(y_1x_1)=w(y_1x_2)$ и $x_1x_2$ стягиваемо в $H$) и $w(v'x_1) \neq w(v'x_2)$. 
Значит ребро $uv'$ принадлежит $G$. Тогда нужно посмотреть на графы $G[\{u, v', x_1, y_1\}]$ и $G[\{u, v', x_1, y_2\}]$. В каждом из них стягиваемая пара это $v'x_1$, потому что есть цикл и нет ребра $uy_j$. Мы получили противоречие, поскольку 
\[
w(y_1v')=\frac{w(y_1v')}{w(y_1x_1)}=\frac{w(uv')}{w(ux_1)}=\frac{w(y_2v')}{w(y_2x_1)}=w(y_2v').
\]

Во втором варианте ребра $zx_2, zy_2$ тоже есть в $G$, и их веса равны соответственно $w(zx_1)$ и $w(zy_1)$, потому что $z \neq v'$, а значит $z \in H$. Также $z \notin \{x_1,x_2, y_1,y_2,v'\}$.
Домножим все ребра, инцидентные вершинах $v'$ и $z$ так, чтобы $w(v'x_1)=w(zx_1)=1$ (это не меняет веса внутри $G[\{x_1,x_2,y_1,y_2\}]$), что влечет $w(v'y_1)=w(zx_2)=1$. Положим $a=w(zy_1)=w(zy_2)$, и $b=w(v'x_2)=w(v'y_2)$. Определение $z$ влечет $a \neq 1$, а нестягиваемость $x_1$ и $x_2$ в $G$ дает $b \neq 1$. 
Заметим, что если в $G$ нет ребра $zv'$, то в графе $G[\{x_1,x_2,v',z\}]$ есть пять ребер из шести, а значит пара $v'z$ должна быть стягиваема в этом графе, но $w(x_1z)=1=w(x_2z)$, а $w(x_2v') = b \neq  1 = w(x_1v')$. Противоречие, значит в $G$ есть ребро $zv'$. Тогда к графу $J = G[\{x_1,x_2, y_1,y_2,z, v'\}]$ применима лемма~\ref{lm:complete} и в нем есть две стягиваемые пары. 
Ни одна из пар внутри множества $U := \{ x_1,x_2,y_1,y_2\}$ не стягиваема, потому что у любой пары, кроме $x_1x_2$ и $y_1y_2$ разные веса на ребрах к вершине $z$, а у пар $x_1x_2$ и $y_1y_2$ разные веса на ребрах к вершине $v'$. 
Тогда каждая из стягиваемых пар в $J$ содержит по вершине не из $U$, значит одна из них $v't$, где $t \in U$.
Тогда $t$ не принадлежит какой-то из пар $x_1x_2$, $y_1y_2$ (не умаляя общности, $x_1x_2$), и тогда рассмотрение индуцированного подграфа $G[\{t,v',x_1,x_2\}]$ дает стягиваемые в нем пары $v't$ и $x_1x_2$, а значит по лемме~\ref{lm:covering} пара $x_1x_2$ была стягиваема и в $G$.  
Получается, что со случаем, когда $v$ не принадлежит ни одной стягиваемых пар в $H$ мы разобрались.

\paragraph{Случай 2.}
Теперь пусть $v$ принадлежит одной из пар $x_1x_2$ и $y_1y_2$; не умаляя общности, $v = x_1$. Напомним, что по определению
$N_G(v) \setminus \{v'\} = N_G(v') \setminus \{v\}$, а также $N_H(x_1) \setminus \{x_2\} = N_H(x_2) \setminus \{x_1\}$ и 
$N_H(y_1) \setminus \{y_2\} = N_H(y_2) \setminus \{y_1\}$.
Тогда следующие шесть ребер либо все есть в $G$, либо их всех нет: $x_1y_1,x_1y_2,x_2y_1,x_2y_2,v'y_1,v'y_2$. Если их всех нет, то пара $y_1y_2$ стягиваема в $G$.

Остался случай, когда все эти ребра есть.
Рассматривая графы $G[\{x_1,v',y_1,y_2\}]$ и $G[\{x_2,v',y_1,y_2\}]$, мы видим, что либо в них стягиваема пара $y_1y_2$ (а тогда она по лемме~\ref{lm:covering} стягиваема в $G$), либо в $G$ есть ребра $v'x_1$, $v'x_2$ и $y_1y_2$. Аналогично, глядя на граф $G[\{v',x_1,x_2,y_1\}]$, мы видим, что либо ребро $x_1x_2$ есть в $G$, либо $x_1x_2$ --- стягиваемая пара. Таким образом если мы не нашли стягиваемую пару, то подграф $G[\{x_1,x_2,y_1,y_2,v'\}]$ полный, а значит по лемме~\ref{lm:complete} в нем стягиваемы две непересекающиеся пары вершин. Тогда одна из них не содержит $v'$, и при этом она не равна $x_1x_2$ и $y_1y_2$, потому что в таком случае эта пара была бы стягиваема в $G$ по лемме~\ref{lm:covering}. Более того, ни одна из этих пар не содержит $v'$, потому что иначе можно рассмотреть индуцированный граф на этой паре и непересекающейся с ней паре из $x_1x_2$, $y_1y_2$; тогда соответствующая пара из $x_1x_2$, $y_1y_2$ окажется стягиваемой в $G$ по лемме~\ref{lm:covering}. 

Не умаляя общности, осталась ситуация, когда стягиваемыми парами в $G[\{x_1,x_2,y_1,y_2,v'\}]$ являются $x_1y_1$ и $x_2y_2$.
Дальнейшие рассуждения совпадают с окончанием разбора случая 1: лемма~\ref{lm:multyplying} позволяет за счет четырех стягиваемых пар в $G[\{x_1,x_2,y_1,y_2\}]$ добиться единичных весов внутри $G[\{x_1,x_2,y_1,y_2\}]$, а последующее домножение всех ребер, инцидентных  вершине $v'$, дает $1 = w(v'x_1) = w(v'y_1)$ и $a = w(v'x_2) = w(v'y_2) \neq 1$.
Пара $y_1$ и $x_1$ не стягиваема в $G$, значит существует вершина $z$, которая их различает эти вершины.
Тогда либо $z \in (N_G(x_1) \Delta N_G(y_1)) \setminus \{x_1,y_1\}$, либо  $z \in N_G(x_1) \cap N_G(y_1)$ и $w(zx_1) \neq w(zy_1)$.

Предположим, что она делает это графски. Есть два подслучая, в первом $z$ соединена с $x_1,x_2,v'$; рассмотрим графы 
$G[\{z,v',x_1,y_1\}]$ и $G[\{z,v',x_1,y_2\}]$. В них стягиваемыми парами являются $zx_1$ и $v'y_i$, а тогда 
 \[
 w(y_1v')=\frac{w(y_1v')}{w(y_1x_1)}=\frac{w(zv')}{w(zx_1)}=\frac{w(y_2v')}{w(y_2x_1)}=w(y_2v'),
 \]
и $y_1y_2$ стягиваема в $G$. Во втором подслучае, 
$z$ соединена с $y_1,y_2$, и тогда рассмотрим граф $G[\{z,v',y_2,y_1\}]$. В нем нет ребра $v'z$, а значит в этом графе пары $v'z$ и $y_1y_2$ стягиваемы. По лемме~\ref{lm:covering} $y_1y_2$ стягиваема во всем $G$.

Во втором варианте $z \notin \{x_1,x_2,y_1,y_2,v'\}$ и в $G$ существуют все ребра, ведущие из $z$ в уже рассмотренную пятерку вершин $x_1x_2y_1y_2v'$, то есть к графу $G[\{x_1,x_2,y_1,y_2,v',z\}]$ применима лемма~\ref{lm:complete}. 

\paragraph{Завершение доказательства.} В обоих случаях мы нашли стягиваемую пару $xy$ в $G$. Давайте вернемся в начало доказательство и рассмотрим эту пару в качестве пары $vv'$, а именно посмотрим на граф $H' = G[V\setminus \{x\}]$. 
Случай не двусвязного $H'$ разобран в начале доказательства. 
Если же $H'$ двусвязен, то по предположению индукции в нем найдутся две непересекающиеся стягиваемые пары; рассмотрим такую из них, которая не содержит $x$ и назовем ее $uv$. 
Тогда $xy$ и $uv$ --- искомые непересекающиеся стягиваемые пары в $G$. Доказательства утверждения~\ref{mainProp} и теоремы~\ref{mainweighted} закончены.

\section{Взвешенные дистанционно-наследуемые графы}
\label{diss}

По аналогии с теоремой~\ref{th:main} определим класс взвешенных дис\-тан\-ци\-он\-но-наследуемых графов как класс взве\-шен\-но-стабильных графов.
Покажем, что данное определение естественно. По лемме~\ref{lm:support} носитель взвешенного дис\-та\-ци\-он\-но-наследуемого графа является дистанционно-наследуемым графом. Далее, покажем, что наше определение совпадает еще с одним алгебраическим определением, данным в статье~\cite{oum2005rank}.

Обозначим ранг матрицы $M$ через $\rk(M)$.
Пусть дан взвешенный граф $G$, и его матрица смежности --- $M_G$;
 для произвольных подмножеств $A,B \in V(G)$ определим $\rk_G(A,B) := \rk(M_G[A,B])$, где $M[A,B]$ --- подматрица, с множеством столбцов, соответствующим $A$ и множеством строк, соответствующим $B$.

 Положим $\cutrk(A)=\rk_G(A, V(G) \setminus A )$; ясно что $\cutrk(A) = \cutrk(V(G) \setminus A)$, так как матрица $M_G$ является симметричной.

Рассмотрим произвольное дерево $T$, у которого висячими вершинами являются в точности вершины $G$, а все остальные вершины имеют степень $3$.
 Любое ребро этого дерева $e$ задает разбиение вершин $G$ на два множества $A(e)$ и $B(e)$, соответствующие компонентам связности в $T \setminus e$. Определим \textit{ширину ребра} $e$ как $\cutrk(A(e)) = \cutrk(B(e))$. 
 Назовем \textit{шириной графа} минимум по всем деревьям $T$ максимальной ширины ребра в нем.

В статье~\cite{oum2005rank} показано, что класс графов (не взвешенных, т.е. в которых все веса равны $1$) с шириной $1$ совпадает с дистанционно-наследуемыми.

\begin{theorem} \label{th:onemoredef}
Класс взвешенных дистанционно-наследуемых графов совпадают с классом взвешенных графов с шириной $1$.    
\end{theorem}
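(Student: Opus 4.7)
The plan is to prove the two inclusions $\{\text{weighted DH}\}\subseteq\{\text{width}\leq 1\}$ and $\{\text{width}\leq 1\}\subseteq\{\text{weighted DH}\}$ separately, using Theorem~\ref{mainweighted} as the working characterization of the left-hand class.

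For the first inclusion, I would induct on the construction from Theorem~\ref{mainweighted}. The base case of a single vertex is trivial. The multiplication step at a vertex $v$ by $c>0$ merely scales row and column $v$ of $M_G$, which preserves the rank of every submatrix, hence $\cutrk$ on every subset, hence the width of any fixed decomposition tree. For the copy step I would replace the leaf $v$ in the current decomposition tree by a \emph{cherry}: a new degree-$3$ internal node $u'$ adjacent to the two leaves $v,v'$ and to the rest of the tree. The cut $\{v,v'\}$-vs-rest has rank~$1$ because the rows of $v$ and $v'$ in $M_G$ restricted to $V\setminus\{v,v'\}$ are equal by construction (and remain proportional after any subsequent scaling at $v'$); the two cherry edges give singleton cuts of rank $\leq 1$; and every pre-existing cut simply gains a duplicate row or column for $v'$ in its submatrix, so its rank does not increase. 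For the gluing of $(G_1,w_1)$ and $(G_2,w_2)$ at a common vertex $a$, I would remove the leaf $a$ from each $T_i$ and reconnect the freed ends at a new degree-$3$ node carrying a fresh leaf $a$; since $G_1\cap G_2=\{a\}$, every cut coming from an internal edge of some $T_i$ has vanishing rows and columns for the vertices of $G_{3-i}\setminus\{a\}$ and thus inherits its rank from $T_i$, while the three new tree edges each produce a cut with at most one nonzero row or column.

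For the second inclusion, I would induct on $|V(G)|$; the cases $|V|\leq 2$ follow directly from the construction. For $|V|\geq 3$ fix a width-$1$ decomposition tree $T$ and locate a cherry $\{v,v'\}$. Such a cherry must exist in any cubic tree: rooting $T$ at an arbitrary leaf, the two children of any deepest internal node are themselves leaves. The cherry cut $\{v,v'\}$-vs-$V\setminus\{v,v'\}$ has rank $\leq 1$, and since $G$ is connected with $|V|\geq 3$ the rank equals exactly~$1$. Hence the rows of $v$ and $v'$ in $M_G$ restricted to $V\setminus\{v,v'\}$ are proportional: $N_G(v)\setminus\{v'\}=N_G(v')\setminus\{v\}$ and there exists $c>0$ with $w(vu)=c\,w(v'u)$ for every common neighbour $u$. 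The graph $G':=G\setminus v'$ inherits width $\leq 1$ from $T$ by deleting the leaf $v'$ and smoothing the resulting degree-$2$ node, and it remains connected because every path through $v'$ can be rerouted through $v$ thanks to twinness. By the inductive hypothesis $G'$ is weighted DH; then $G$ is obtained from $G'$ by copying $v$ to $v'$ (preserving the weights on edges from $v$, and adding the edge $vv'$ of weight $w(vv')$ if it is present in $G$) followed by scaling all edges at $v'$ by $1/c$, both of which are admissible operations in Theorem~\ref{mainweighted}.

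The main obstacle is the forward direction, where one must carry the free positive scalars from the construction through into a single decomposition tree without losing the width-$1$ property, and track how cuts transform under copy and glue; the delicate ingredient is that a copy followed by an edge-scaling at the new vertex still produces two proportional rows in $M_G$, which is exactly what makes the cherry cut have rank~$1$. The reverse direction is structurally transparent once a cherry is located: the rank-$1$ cut hands over the weighted twin pair directly, and the twin-reduction is essentially forced.
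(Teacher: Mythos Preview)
Your approach is the paper's: cherry surgery on the decomposition tree in both directions, with the glue-at-the-cut-vertex gadget for the non-2-connected case. The forward direction is correct as stated.

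The backward direction has one real gap. From $\cutrk(\{v,v'\})=1$ you conclude that the rows of $v$ and $v'$ over $V\setminus\{v,v'\}$ are proportional with a \emph{positive} constant, hence $N_G(v)\setminus\{v'\}=N_G(v')\setminus\{v\}$. But rank~$1$ of a $2\times(n-2)$ matrix with nonnegative entries only says the rows span a line; one of them may be identically zero. Take $G$ the path $a\text{--}b\text{--}c\text{--}d$: the unique cubic decomposition tree has the cherry $\{a,b\}$, and over $\{c,d\}$ the row of $a$ is $(0,0)$ while the row of $b$ is $(w(bc),0)$. Here $N_G(a)\setminus\{b\}=\varnothing\ne\{c\}=N_G(b)\setminus\{a\}$, so your twin claim fails and your reconstruction ``copy $v$ to $v'$, then scale at $v'$ by $1/c$'' cannot manufacture a pendant vertex out of a copy. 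The paper splits precisely on this: if the proportionality coefficient is zero, then one cherry leaf (say $v'$) has no neighbours in $V\setminus\{v,v'\}$, so by connectedness it is a pendant on $v$, and $G$ is recovered from $G\setminus v'$ by \emph{gluing} a single edge at $v$, which is one of the admissible operations in Theorem~\ref{mainweighted}; otherwise the coefficient is positive and the weighted-twin argument goes through exactly as you wrote. Add that branch and your proof is complete and coincides with the paper's.
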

\begin{proof}
    
Везде далее считаем, что во всех графах хотя бы $3$ вершины, и граф связен. Отметим, что ширина связного графа не меньше, чем $1$ (иначе существует разбиение его вершин на $2$ множества, между которыми нет ребер).

\paragraph{В одну сторону.}
Пусть дан взвешенно-стабильный граф. Покажем, что он ширины $1$. Для этого нужно предъявить дерево, в котором ширины всех ребер равны $1$. 
Будем строить его по индукции по $n=|V|$.

\subparagraph{Базой} являются случаи $n \leq 3$.
Если в графе не больше $3$ вершин, то дерево всего одно и оно подходит, потому что в любом разбиении хотя бы одна из долей имеет размер не более $1$, а значит ранг не больше $1$.

\subparagraph{Переход.}  В графе $G$ хотя бы четыре вершины, значит по теореме~\ref{mainweighted} взвешенно-стабильный граф $G$ либо недвусвязен, либо содержит стягиваемую пару.

\textit{Недвусвязный случай.}
  В этом случае $G$ содержит точку сочленения $u$, то есть $G [V\setminus u]$ распадается на компоненты связности $A_1, \dots, A_k$. 
  Пусть $H_1 = G[A_1 \cup u]$, $H_2 = G[V \setminus A_1]$. По утверждению~\ref{pr:basic}~(iii) $H_1,H_2$ --- взвешенно-стабильные графы с меньшим числом вершин, значит существуют $T_1, T_2$, реализующие ширину $1$.  
  Рассмотрим $T=T_1 \cup T_2$. Это дерево, в котором висячие вершины --- в точности вершины графа $G$ кроме $u$, и степени всех вершин, кроме $u$ равны $3$ или $1$, а вершина $u$ имеет степень 2.
  Назовем нынешнюю вершину $u$ вершиной $v$, и повесим на нее вершину $u$ (получилось дерево $T'$). 
 Теперь все степени $3$ или $1$, и висячие вершины это вершины $G$.  Осталось проверить, что $T'$ имеет ширину $1$.
 Действительно, пусть $e$ --- ребро в $T'$, по определению $E(T') = E(T_1) \cup E(T_2) \cup \{uv\}$.  
 Ширина ребра $uv$ равна $1$, потому что размер одной из долей разбиения равен $1$. 
 Пусть $e$ является ребром $T_i$, не умаляя общности, $i=1$.
 Значит ребро $e$ разбивает вершины $G$ на два множества, одно из которых ($A = A(e)$) целиком лежит в $H_1 \setminus u$.
  Тогда 
  \[
  \cutrk_G(e)=\rk(M_G[A,G \setminus A])=\rk(M_G[A, H_1 \setminus A]),
  \]
  потому что эти матрицы отличаются на несколько нулевых строчек. 
 С другой стороны 
 \[
 \rk(M_G[A, H_1 \setminus A])=\rk(M_{H_1}[A,H_1 \setminus A])=\cutrk_{H_1}(e) = 1
 \]
 по предположению индукции.  Значит ширина $e$ равна $1$ для любого ребра $e$, то есть ширина $T'$ равна $1$.  

\textit{В двусвязном случае} граф $G$ получен копированием некоторой вершины $u$ из графа $G_1$, то есть в графе $G$ есть вершины $u,u_1$, являющиеся стягиваемой парой (иными словами $G_1=G[V \setminus u_1]$).  Поскольку $G_1$ --- взвешенно-стабильный, существует дерево $T_1$, реализующее ширину $1$. Рассмотрим его висячую вершину, соответствующую $u$, пусть ее единственный сосед $z$.
 Удалим из $T_1$ ребро $uz$ и добавим вершину $v$ вместе с ребрами $vz,vu,vu_1$; назовем полученное дерево $T$.
Теперь висячими вершинами $T$ являются в точности вершины $G$, а степени остальных вершин равны $3$. Осталось проверить, что у $T$ ширина $1$. Рассмотрим произвольное ребро из $T$; это или ребро $vu$, или $vu_1$, или ребро из $T_1$. Для первых двух случаев очевидно, что их ширина $1$, потому что одна из долей разбиения имеет размер $1$.
Пусть теперь это ребро из $T_1$. Оно разбивает множество вершин $G$ на два, одно из которых ($A = A(e)$) не содержит вершин $u,u_1$ (потому что $e$ не принадлежит пути между $u$ и $u_1$ в $T$).
Имеем
\[
\cutrk_G=\rk(M_G[A,G\setminus A])=\rk(M_G[A,G_1\setminus A]),
\]
так как столбец, соответствующий вершине $u_1$, пропорционален столбцу, соответствующему $u$ в силу стягиваемости пары $uu_1$ (то есть  нули расположены в одних и тех же строчках, а отношения ненулевых значений равны, по определению стягиваемой пары). При этом 
\[
\rk(M_G[A,G_1\setminus A])=\rk(M_{G_1}[A,G_1\setminus A])=\cutrk_{G_1}(e)=1.
\]
Значит и ширина дерева $T$ равна $1$.   

\paragraph{В другую сторону.} Покажем, что взвешенный граф ширины 1 является взвешенно дис\-тан\-ци\-он\-но-нас\-ле\-ду\-е\-мым.
Проведем доказательство по индукции по числу вершин.
\subparagraph{База.}
Если в графе не больше $3$ вершин, то он всегда взвешенный дистанционно-наследуемый.
\subparagraph{Переход.}
Пусть дан граф взвешенный $G$ и дерево $T$ со степенями $3$ и $1$ и имеющее ширину $1$.  Дерево $T$ содержит вершину $v$, на которой висят ровно две вершины $u$ и $u_1$. Утверждается, что эти вершины образуют стягиваемую пару. 
В самом деле, рассмотрим разбиение по по третьему ребру из $e = vz$. 
Части этого разбиения $A(e) = \{u, u_1\}$, $B(e) = V(G) \setminus  \{u, u_1\}$.
По определению $T$ ширина $e$ равна 1, то есть вектора $a_i = w(uv_i)$ и $b_i = w(u_1v_i)$ пропорциональны. 
Если коэффициент пропорциональности равен нулю, то одна из вершин $u,u_1$ не имеет в графе $G$ соседей вне множества $u,u_1$.
Если же коэффициент пропорциональности ненулевой, то $w(uv_i)=0$ тогда и только тогда, когда $w(u_1v_i)=0$, а для не ненулевых весов значение $\frac{w(uv_i)}{w(u_1v_i)}$ не зависит от выбора $v_i \in B(e)$.

Значит в паре $u,u_1$ либо одна из вершин висячая и висит на другой (не умаляя общности, $u_1$ висит на $u$), либо они образуют стягиваемую пару. Теперь рассмотрим граф $G \setminus u_1$ найдем для него дерево с шириной $1$. Для этого заменим в дереве $T$ вершины  $v,u,u_1$ со всеми инцидентными ребрами на одну вершину $u$ с ребром $uz$; назовем полученное дерево $T'$.
Заметим, что разбиения множества $V \setminus \{u_1\}$, задаваемые деревом $T'$ получаются из разбиений множества $V$ деревом $T$ удалением вершины $u_1$.
Значит ширина каждого разбиения $1$, и у дерева $T'$ ширина $1$. Следовательно граф $G[V \setminus u]$ имеет взвешенную ширину $1$; он взвешенный дистанционно-наследуемый по предположению индукции, a граф $G$ был получен из него копированием вершины или добавлением висячей. По теореме~\ref{mainweighted} $G$ --- взвешенный дистанционно-наследуемый граф. 

\end{proof}

\section{Заключение}

Центральная теорема~\ref{mainweighted} дает явную характеризацию взвешенных вещественно стабильных графов.
Аналогично невзвешенному случаю, из теоремы~\ref{mainweighted} и лемм~\ref{lm:copy} и~\ref{lm:gluing} следует, что степенной перечислитель взвешенного стабильного графа раскладывается на линейные множители. 

Полученный результат позволяет определить класс взвешенных дистанционно-наследуемых графов как графы. 
Оказывается, что такое определение соответствует естественному обобщению другого алгебраического определения~\cite{oum2005rank}.

Напомним, что связанные открытые вопросы можно найти в статье~\cite{cherkashin2023stability}.

\paragraph{Благодарности.} Авторы благодарны Федору Петрову за внимание к работе.
Работа Данилы Черкашина поддержана грантом Российского научного фонда номер 22-11-00131.

\bibliographystyle{plain}
\bibliography{main}

\end{document}